\documentclass[a4paper,reqno]{amsart}

\usepackage[spanish,english]{babel}
\usepackage{amsmath,amssymb,amsthm,color}

\addtolength{\textwidth}{2cm} \addtolength{\hoffset}{-1cm}
\addtolength{\textheight}{2cm} \addtolength{\voffset}{-1cm}

\newtheorem{Th}{Theorem}[section]

\newtheorem{Rm}{Remark}[section]

\newtheorem{Prop}{Proposition}[section]
\newtheorem{Lem}{Lemma}[section]
\newtheorem{Def}{Definition}[section]

\numberwithin{equation}{section}

\title[$T1$ criterion for Hermite]{A $T1$  criterion for Hermite-Calder\'on-Zygmund operators on the $BMO_H(\mathbb R^n)$ space and applications}

\author[J.J. Betancor]{Jorge J. Betancor}
\address{Departamento de An\'alisis Matem\'atico\\
Universidad de la Laguna\\
Campus de Anchieta, Avda. Astrof{\'\i}sico Francisco S\'anchez, s/n\\
38271 La Laguna (Sta. Cruz de Tenerife), Spain}
\email{jbetanco@ull.es}

\author[R. Crescimbeni]{Raquel Crescimbeni}
\address{Departamento de Matem\'aticas\\
Facultad de Econom\'{\i}a y Administraci\'on\\
Universidad de Co\-mahue\\
8300 Neuqu\'en, Argentina}
\email{rcrescim@uncoma.edu.ar}

\author[J.C. Fari\~na]{Juan C. Fari\~na}
\address{Departamento de An\'alisis Matem\'atico\\
Universidad de la Laguna\\
Campus de Anchieta, Avda. Astrof{\'\i}sico Francisco S\'anchez, s/n\\
38271 La Laguna (Sta. Cruz de Tenerife), Spain}
\email{jcfarina@ull.es}

\author[P.R. Stinga]{Pablo Ra\'ul Stinga}
\address{Departamento de Matem\'aticas\\
Universidad Aut\'onoma de Madrid\\
28049 Madrid, Spain}
\email{pablo.stinga@uam.es}

\author[J.L. Torrea]{Jos\'e L. Torrea}
\address{Departamento de Matem\'aticas
and ICMAT-CSIC-UAM-UCM-UC3M\\
Universidad Aut\'onoma de Madrid\\
28049 Madrid, Spain}
\email{joseluis.torrea@uam.es}

\thanks{Research partially supported by MTM2007/65609,  MTM2008-06621-C02-01 and PCI 2006-A7-0670 from Mi\-nis\-terio de Ciencia e Innovaci\'on, Spain}

\begin{document}

\begin{abstract}
In this paper we establish a $T1$ criterion for the boundedness of Hermite-Calder\'on -Zygmund operators on the $BMO_H(\mathbb{R}^n)$ space naturally associated to the Hermite operator $H$. We apply this criterion in a systematic way to prove the boundedness on $BMO_H(\mathbb R^n)$ of certain harmonic analysis operators related to $H$ (Riesz transforms, maximal operators, Littlewood-Paley $g$-functions and variation operators).
\end{abstract}

\maketitle

\section{Introduction}

It is well-known the crucial role played by $T1$ and its relation with the classical $BMO$ space of John and Nirenberg in the analysis of $L^p$-boundedness of Calder\'on-Zygmund operators $T$ (see \cite{DJ,H,H2} and \cite[p.~590]{Gra}).

Moreover, $T1$ is an important object to understand the behavior of certain classes of integral operators in H\"older spaces. Indeed, in \cite{StinTo} some operators related to the harmonic oscillator (also known as Hermite operator)
\begin{equation}\label{oscilador armonico}
H=-\Delta+|x|^2,\qquad\hbox{in}~\mathbb{R}^n,
\end{equation}
such as the fractional harmonic oscillator $H^\sigma$, the Hermite-Riesz transforms, the fractional integrals $H^{-\sigma}$, among others, are studied when they act on certain H\"older spaces $C^{k,\alpha}_H(\mathbb{R}^n)$, $k\in\mathbb{N}$, $0<\alpha<1$, adapted to $H$. Roughly speaking, these operators $T$ can be expressed as
\begin{equation}\label{operador T}
Tf(x)= \int_{\mathbb R^n} K(x,y)(f(y)-f(x))~dy + f(x)T1(x).
\end{equation}
Here the kernel $K(x,y)$ has a singularity for $x\sim y$, so some regularity is required on $f$ for the integral to be well defined. Looking at how the operator $T$ is written, it is natural to expect that $T1$ is a bounded pointwise multiplier in the class where $f$ belongs to. This is in fact the situation in \cite{StinTo}. Nevertheless, the boundedness of operators like \eqref{operador T} for the case $\alpha =0$ is not covered in \cite{StinTo} (it does not make sense to take 0 as a H\"older exponent). However, since the H\"older spaces $C^\alpha$ can be seen as spaces of $BMO_\alpha$-type (see for instance \cite{Torchinsky}), it would be natural to work with $BMO_H(\mathbb{R}^n)$. Note that $BMO_H(\mathbb{R}^n)$ is the natural substitute as extremal space in the Harmonic Analysis for the Hermite function expansion setting (see Section \ref{Section:BMO}). The last question motivates a characterization of pointwise multipliers on $BMO_H(\mathbb{R}^n)$. We believe that such a result belongs to the folklore, but for completeness we present it here with a proof, see Proposition \ref{multiplicador}. Let us point out that the characterization of pointwise multipliers for the $BMO$ space on the torus (compact support case) was proved by S. Janson \cite{Janson} and for the Euclidean $BMO(\mathbb R^n)$ by E. Nakai and K. Yabuta \cite{Nakai-Yabuta}.

To obtain the boundedness on $BMO_H(\mathbb R^n)$ for operators $T$ of the form \eqref{operador T} it seems natural to impose conditions on $T1$. An answer in this direction is provided in our first main result.

\begin{Th}[$T1$-type criterion] \label{criterioT1}
Let $T$ be a Hermite-Calder\'on-Zygmund operator, see Definition \ref{definicion}. Then $T$ is a bounded operator on $BMO_H(\mathbb{R}^n)$ if and only if there exists $C>0$ such that the following two conditions hold:
\begin{enumerate}
\item[(i)] $\displaystyle \frac{1}{|B(x,\gamma(x))|}\int_{B(x,\gamma(x))}|T1(y)|~dy\leq C$, for every $x\in\mathbb{R}^n$, and
\item[(ii)] $\displaystyle \left(1+\log\left(\frac{\gamma(x)}{s}\right)\right)\frac{1}{|B(x,s)|}\int_{B(x,s)}|T1(y)-(T1)_{B(x,s)}|~dy \leq C, $
for every $x\in \mathbb R^n$ and $s>0$ such that $0<s\leq \gamma(x)$, where $\gamma$ is given by
\begin{equation}\label{definicion radio critico}
\gamma(x):=
\left\{\begin{array}{ll}
       \frac{1}{1+|x|},& |x| \geq 1;\\
       \frac{1}{2},& |x| < 1.
\end{array}\right.
\end{equation}
\end{enumerate}
Here, as usual, $T1_{B(x,s)}=\displaystyle \frac{1}{|B(x,s)|}\int_{B(x,s)}T1(y)~dy$.
\end{Th}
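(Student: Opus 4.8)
The plan is to reduce the statement, via the representation \eqref{operador T}, to a boundedness assertion about the ``principal value'' part of $T$ combined with the pointwise-multiplier characterization in Proposition \ref{multiplicador}. Write
\[
Tf = \widetilde{T}f + (T1)\,f, \qquad \widetilde{T}f(x) := \int_{\mathbb{R}^n} K(x,y)\bigl(f(y)-f(x)\bigr)\,dy,
\]
so that $\widetilde{T}1=0$. The key reduction is the claim that $\widetilde{T}$ is bounded on $BMO_H(\mathbb{R}^n)$ for every Hermite--Calder\'on--Zygmund operator, using only the kernel hypotheses of Definition \ref{definicion}. Granting this claim, the whole theorem follows at once: since the multiplication operator $M_{T1}\colon f\mapsto (T1)f$ equals $T-\widetilde{T}$, the operator $T$ is bounded on $BMO_H$ if and only if $M_{T1}$ is, that is, if and only if $T1$ is a pointwise multiplier of $BMO_H$; and by Proposition \ref{multiplicador} the latter is equivalent to conditions (i) and (ii). Note that $1\in BMO_H$, so $T1$ is well defined, and for $f\in BMO_H$ the function $\widetilde{T}f$ is understood through the local/global decomposition below (equivalently, modulo constants), which absorbs the singularity of $K$ on the diagonal.

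Thus the entire argument rests on proving $\widetilde{T}\colon BMO_H\to BMO_H$. Fix $f$ and a ball $B=B(x_0,s)$, and distinguish the two regimes dictated by the critical radius \eqref{definicion radio critico}. For a \emph{subcritical} ball, $s\le \gamma(x_0)$, I would split $f=(f-f_{2B})\chi_{2B}+(f-f_{2B})\chi_{(2B)^c}+f_{2B}=:f_1+f_2+f_{2B}$ and estimate the oscillation of $\widetilde{T}f$ over $B$. The local term $\widetilde{T}f_1$ is controlled through the $L^2$ boundedness built into Definition \ref{definicion} together with the $\gamma$-adapted John--Nirenberg inequality, which bounds $\|f-f_{2B}\|_{L^2(2B)}$ by $|2B|^{1/2}\|f\|_{BMO_H}$; the far term $\widetilde{T}f_2$ is handled by subtracting $\widetilde{T}f_2(x_0)$ and summing the H\"ormander-type regularity estimate of $K$ over the dyadic annuli $2^{j}B\setminus 2^{j-1}B$, where the telescoping of the local averages $f_{2^{j+1}B}-f_{2B}$ produces the usual convergent series $\sum_j 2^{-j\delta}\,j\,\|f\|_{BMO_H}$; and $\widetilde{T}f_{2B}=0$.

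For a \emph{supercritical} ball, $s>\gamma(x_0)$, no constant may be subtracted and one must bound the bare average $\tfrac{1}{|B|}\int_B|\widetilde{T}f|$ by $\|f\|_{BMO_H}$. This is where the extra decay of the Hermite kernel is indispensable: the size estimate of Definition \ref{definicion} carries a factor of the form $\bigl(1+|x-y|/\gamma(x)\bigr)^{-N}$ localizing the kernel at the critical scale, so the contributions of annuli at scales larger than $\gamma$ are summable and are dominated precisely by the average-of-$|f|$ part of the $BMO_H$ norm, while the contributions at scales $\lesssim\gamma$ reduce to finitely many subcritical estimates of the previous type. I expect the supercritical/global estimate to be the main obstacle, since it is here that the interplay between the critical radius $\gamma$, the kernel decay, and the two-part structure of $\|\cdot\|_{BMO_H}$ must be balanced carefully; once it is settled, both necessity and sufficiency drop out of the algebraic identity $T=\widetilde{T}+M_{T1}$ and Proposition \ref{multiplicador}.
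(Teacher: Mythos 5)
There is a genuine gap, and it sits exactly at the step you flag as the ``key reduction.'' Your claim that $\widetilde{T}f(x)=\int K(x,y)(f(y)-f(x))\,dy$ is bounded on $BMO_H(\mathbb{R}^n)$ for \emph{every} Hermite--Calder\'on--Zygmund operator, using only the kernel hypotheses of Definition \ref{definicion}, is not justified and is false in general. Expand $\widetilde{T}$ on your own splitting: by linearity $\widetilde{T}f_1=Tf_1-f_1\,T1$, so the ``local term'' is not $Tf_1$ but $Tf_1-(f-f_{2B})\chi_{2B}\,T1$. The $L^2$-boundedness of $T$ controls $Tf_1$, but it says nothing about the oscillation of the product $(f-f_{2B})\,T1$ over $B$; if $T1$ is an unbounded $BMO$-type function (e.g.\ behaving like $\log(1/|x-x_0|)$ near some point, which the kernel conditions do not exclude), then choosing $f$ with comparable logarithmic behaviour makes the mean oscillation of $(f-f_{2B})T1$ over $B(x_0,s)$ blow up like $\log(\gamma(x_0)/s)$. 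Controlling precisely this term is the content of hypothesis (ii) --- in the paper's decomposition \eqref{defBMO} it appears as $f_B\,\mathbb{T}1$, and it is handled by combining (ii) with the bound $|f_B|\leq C\bigl(1+\log(\gamma(x_0)/r_0)\bigr)\|f\|_{BMO_H(\mathbb R^n)}$ from \cite[Lemma~2]{DGMTZ}. Your reduction therefore relocates the whole difficulty into an unproved (and unprovable) lemma rather than removing it. There is also the preliminary issue that $\widetilde{T}f$ is not defined pointwise for $f\in BMO_H(\mathbb R^n)$, since $|f(y)-f(x)|$ is not pointwise small near the diagonal; the paper avoids this by defining $\mathbb{T}f$ through the $L^2$ action on $f\chi_{B_R}$ plus a convergent tail integral.

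The second, independent, problem is the final equivalence. Proposition \ref{multiplicador} characterizes pointwise multipliers by $g\in L^\infty(\mathbb R^n)$ \emph{plus} the logarithmic oscillation condition, and $g\in L^\infty$ is strictly stronger than condition (i) of Theorem \ref{criterioT1}, which only bounds averages of $|T1|$ over critical balls (a function like $\log\log(1/|x|)$ suitably truncated satisfies (i) and (ii) without being bounded). This is why the paper records in Remark \ref{T1 y multiplicador} that ``$T1$ is a multiplier'' is merely a \emph{sufficient} condition for boundedness of $T$. Had your argument been correct, it would prove that boundedness of $T$ on $BMO_H(\mathbb{R}^n)$ forces $T1\in L^\infty(\mathbb{R}^n)$, a stronger necessity statement than the theorem asserts and one for which you give no proof. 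The paper's necessity argument instead tests $T$ against the explicit family $f(\cdot;s,x_0)$ of Lemma \ref{deffF} (truncated logarithms of uniformly bounded $BMO_H$ norm) to extract exactly condition (ii), and uses $T1\in BMO_H(\mathbb R^n)$ (from $1\in BMO_H(\mathbb R^n)$) to get (i); your proposal contains no substitute for this mechanism.
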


\begin{Rm}[Vector-valued setting]\label{util}
Theorem \ref{criterioT1} can also be stated in a vector valued setting. That is, if $Tf$ takes values in a Banach space $X$ then the result holds when we replace the absolute values appearing in hypothesis $\mathrm{(i)}$ and $\mathrm{(ii)}$ by the norm in $X$.
\end{Rm}

\begin{Rm}[How to apply the result]\label{como aplicar}
Assume that $T1$ is a bounded function in $\mathbb{R}^n$. Then $T1$ satisfies the first condition of Theorem \ref{criterioT1}. The second condition is fulfilled whenever there exists $0<\alpha\leq 1$ such that $|T1(x)-T1(y)| \leq C|x-y|^\alpha$, $x,y \in \mathbb R^n$ (for instance, $\mathrm{(ii)}$ holds if $\nabla T1 \in L^\infty(\mathbb R^n)$).
\end{Rm}

We apply Theorem \ref{criterioT1} in a systematic way to prove that several harmonic analysis operators related to $H$ are bounded on $BMO_H(\mathbb R^n)$. The operators are the maximal operators and Littlewood-Paley $g$-functions associated to the heat and Poisson semigroups for $H$ and the Hermite-Riesz transforms (see Section \ref{Section:Aplicaciones}).

\begin{Th}[Harmonic Analysis operators related to $H$]\label{operadores}
The maximal operators and the Littlewood-Paley $g$-functions associated with the heat $\{W_t^H\}_{t>0}$ and Poisson $\{P_t^H\}_{t>0}$ semigroups generated by $H$ and the Hermite-Riesz transforms are bounded from $BMO_H(\mathbb R^n)$ into itself.
\end{Th}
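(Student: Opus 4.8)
The plan is to derive Theorem~\ref{operadores} from the $T1$ criterion of Theorem~\ref{criterioT1}, checking its two hypotheses for each operator by means of Remark~\ref{como aplicar}; that is, in every case I would show that the relevant $T1$ is a bounded function which is in addition Lipschitz (or H\"older) continuous, so that conditions (i) and (ii) hold automatically. The maximal operators and the Littlewood--Paley $g$-functions are sublinear rather than linear, so the first step is to \emph{linearize} them in the Banach-valued framework of Remark~\ref{util}: for the heat maximal operator I regard $f\mapsto\{W_t^H f\}_{t>0}$ as a single operator with values in $X=L^\infty((0,\infty))$, for the $g$-function I take values in $X=L^2((0,\infty),dt/t)$ applied to $\{t\partial_t W_t^H f\}_{t>0}$, and similarly for the Poisson semigroup. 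Since $\|\{W_t^H f(x)\}_t\|_{L^\infty}=W_*^H f(x)$ and $\|\{t\partial_t W_t^H f(x)\}_t\|_{L^2(dt/t)}=g(f)(x)$, boundedness of the vector-valued operators on $BMO_H(\mathbb R^n)$ yields the desired estimates for the scalar sublinear operators.

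Second, I would verify that each of these ($X$-valued, resp.\ scalar) operators is a Hermite--Calder\'on--Zygmund operator in the sense of Definition~\ref{definicion}. For the heat semigroup this rests on Mehler's formula, which gives the explicit kernel $W_t^H(x,y)$ together with the standard size and gradient bounds for $\{W_t^H(x,y)\}_t$ and $\{t\partial_t W_t^H(x,y)\}_t$ viewed as $X$-valued kernels. The Poisson kernel and its $t$-derivatives are then controlled by subordination, and the Hermite--Riesz kernels are handled through the representation $R_i=\delta_i H^{-1/2}$ with $\delta_i=\partial_{x_i}\pm x_i$ and $H^{-1/2}=c\int_0^\infty e^{-tH}t^{-1/2}\,dt$. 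These kernel estimates are essentially those already available in the Hermite literature, so here I would only record that they fit the Calder\'on--Zygmund structure required by Definition~\ref{definicion}.

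The heart of the argument is the computation and estimation of $T1$. For the maximal operators a direct Gaussian integration gives the explicit expression
\begin{equation*}
W_t^H 1(x)=(\cosh 2t)^{-n/2}\exp\!\Big(-\tfrac12|x|^2\tanh 2t\Big),
\end{equation*}
whence $0<W_t^H 1(x)\le 1$ for all $t$ and $x$, so that $W_*^H1(x)=\|\{W_t^H1(x)\}_t\|_{L^\infty}\le 1$ and condition (i) holds. Moreover $|\nabla_x W_t^H1(x)|=(\cosh 2t)^{-n/2}|x|\tanh 2t\,e^{-\frac12|x|^2\tanh 2t}$ is bounded uniformly in $t$ and $x$ (optimizing in $\tanh 2t\in[0,1]$ the Gaussian factor dominates the linear growth of $|x|$), so $W_*^H1$ is Lipschitz and (ii) follows from Remark~\ref{como aplicar}. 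The Poisson case is obtained from this by subordination, giving again $P_*^H1\le1$ with a uniform gradient bound. For the $g$-functions I would estimate the $L^2(dt/t)$-norm of $\{t\partial_t W_t^H1(x)\}_t$ and of its $x$-gradient from the explicit formula above, to show that $g(1)$ is bounded and H\"older continuous. Finally, for the Hermite--Riesz transforms, using $\partial_{x_i}W_t^H1=-x_i\tanh 2t\,W_t^H1$ one finds
\begin{equation*}
R_i 1(x)=\frac{x_i}{\sqrt\pi}\int_0^\infty(1\pm\tanh 2t)\,(\cosh 2t)^{-n/2}e^{-\frac12|x|^2\tanh 2t}\,t^{-1/2}\,dt,
\end{equation*}
which is a bounded and smooth function since the Gaussian factor again absorbs the factor $x_i$; hence (i) and (ii) hold.

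The main obstacle I anticipate is the uniform regularity needed for condition (ii), rather than the boundedness in (i). Concretely, one must control the $x$-derivatives of $T1$ \emph{uniformly in the semigroup parameter} and simultaneously across the two regimes of the critical radius $\gamma$ (that is, for $|x|<1$ and for $|x|\ge1$), where the balance between the polynomial factors $x$ and $\tanh 2t$ and the Gaussian decay $e^{-\frac12|x|^2\tanh 2t}$ has to be tracked carefully; for the $g$-function this must be carried out in the $L^2(dt/t)$-norm, which is the most delicate estimate. Once these uniform H\"older/Lipschitz bounds for $W_*^H1$, $P_*^H1$, $g(1)$ and $R_i1$ are in hand, Theorem~\ref{criterioT1} together with Remarks~\ref{util} and~\ref{como aplicar} gives the boundedness of all the listed operators on $BMO_H(\mathbb R^n)$.
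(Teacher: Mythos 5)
Your overall architecture coincides with the paper's: linearize the sublinear operators into the Banach spaces $E=L^\infty((0,\infty))$ and $F=L^2((0,\infty),dt/t)$, check the Hermite--Calder\'on--Zygmund kernel bounds, and verify the hypotheses on $T1$ through Remarks~\ref{util} and~\ref{como aplicar}. For the heat maximal operator and the heat $g$-function this is exactly Propositions~\ref{maximal} and~\ref{gfuncion}; your closed formula $W_t^H1(x)=(\cosh 2t)^{-n/2}e^{-\frac12|x|^2\tanh 2t}$ is the same as \eqref{acosem4} after Meda's substitution $s=\tanh t$, and your optimization of $|x|\tanh 2t\,e^{-\frac12|x|^2\tanh 2t}$ is the paper's part (iii). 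For the Poisson operators you propose to re-verify the $T1$ conditions for the subordinated kernel; the paper instead transfers the $BMO_H$ estimates directly from the heat case through Bochner's formula and Minkowski's inequality, which spares you from checking the Calder\'on--Zygmund smoothness condition for the Poisson kernel. Both routes work, but if you take yours you must actually verify condition (2) of Definition~\ref{definicion} for the $E$-valued Poisson kernel rather than only its $T1$.

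The genuine gap is in the Hermite--Riesz transforms. You write $R_i1(x)$ as an absolutely convergent integral obtained by applying $\partial_{x_i}$ (or $\delta_i$) under the subordination integral to $W_t^H1$, and then conclude that $R_i1$ is bounded and has a bounded gradient. But $R_i^H1$ is defined as a principal value, $\lim_{\varepsilon\to0^+}\int_{|x-y|>\varepsilon}R_i^H(x,y)\,dy$, and the kernel $R_i^H(x,y)$ is not absolutely integrable near the diagonal, so the identity between the principal value and your closed formula is precisely the nontrivial step: it requires isolating the odd part of $\partial_{x_i}W^H_{t(s)}(x,y)$ (which integrates to zero on annuli) from the remainder, and this is the content of the result the paper cites from \cite{StinTo} to get condition (i). Your proposal uses the formula a second time, differentiating it in $x$ to claim a uniform Lipschitz bound, again without justifying that the derivative of the principal value equals the differentiated formula. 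The paper deliberately avoids this: for condition (ii) it proves only the local estimate $|R^H1(x)-R^H1(y)|\le C\,|x-y|/\gamma(x_k)$ for $x,y$ in a critical ball, by subtracting the comparison kernel $R^{(k)}$ (a truncated Euclidean Riesz kernel with vanishing integrals on annuli) and estimating the difference; note that the resulting Lipschitz constant grows like $1+|x|$ and is \emph{not} uniform, yet it still yields (ii) because $\bigl(1+\log(\gamma(x_0)/s)\bigr)\,s/\gamma(x_0)\le C$. So either you supply the missing justification that the principal value coincides with your explicit formula and that this formula may be differentiated term by term (after which your stronger uniform gradient bound would indeed close the argument via Remark~\ref{como aplicar}), or you fall back on a cancellation argument of the paper's type. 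Also note that the paper's Riesz transform is $\partial_{x_i}H^{-1/2}$, not $(\partial_{x_i}\pm x_i)H^{-1/2}$, so the factor $(1\pm\tanh 2t)$ in your formula should simply be $\tanh 2t$.
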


We also consider variation operators. Let $(X, \mathcal{F},\mu)$ be a measure space and $\{T_t\}_{t>0}$ be an uniparametric family of bounded operators in $L^p(X)$ for some $1 \leq p < \infty$, such that $\displaystyle \lim_{t \rightarrow 0^+}T_tf(x)$ exists for a.e. $x \in X$. In the last years many papers devoted their attention to analyze the speed of convergence of the limit above in terms of the boundedness properties of the $\rho$-variation operator $\mathcal{V}_\rho(T_t)$, $\rho > 2$. Such operator is defined by
$$\mathcal{V}_\rho( T_t)(f)(x) =\sup_{t_j\searrow 0}\left(\sum_{j=1}^\infty|T_{t_j}f(x)-T_{t_{j+1}}f(x)|^\rho\right)^{1/\rho},$$
where the supremum is taken over all the sequences of real numbers $\{t_j\}_{j \in \mathbb N}$ that decrease to zero. The uniparametric families we are interested in are: the heat semigroup $\{W_t^H\}_{t>0}$, the Poisson semigroup $\{P_t^H\}_{t>0}$ and the truncated integral operators for the Hermite-Riesz transforms $\{R_H^\varepsilon\}_{\varepsilon >0}$ (see Section \ref{Section:Aplicaciones} for definitions). The $L^p$--theory for the variation operators related to $\{W_t^H\}_{t>0}$, $\{P_t^H\}_{t>0}$ and $\{R_H^\varepsilon\}_{\varepsilon >0}$ was studied in \cite{CMMTV} and \cite{CMTT}.

\begin{Th}[Variation operators]\label{variacion}
Let $\rho >2$. Denote by $\{T_t\}_{t>0}$ any of the uniparametric families of operators $\{W_t^H\}_{t>0}$, $\{P_t^H\}_{t>0}$ or $\{R_H^\varepsilon\}_{\varepsilon >0}$. Then the variation operator $\mathcal {V}_\rho(T_t)$ is bounded from $BMO_H(\mathbb R^n)$ into itself.
\end{Th}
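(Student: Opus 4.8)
The plan is to deduce Theorem \ref{variacion} from the $T1$ criterion of Theorem \ref{criterioT1}, following the same scheme used for Theorem \ref{operadores}: I read each variation operator as the $X$-norm of a single vector-valued Hermite--Calder\'on--Zygmund operator and then check the two conditions on its action on the constant function $1$. Concretely, fix $\rho>2$ and let $F_\rho$ be the Banach space obtained from the functions $g$ on $(0,\infty)$ of finite $\rho$-variation, endowed (modulo constants) with the seminorm $\|g\|_{F_\rho}=\sup_{t_j\searrow 0}(\sum_j|g(t_j)-g(t_{j+1})|^\rho)^{1/\rho}$. For each family $\{T_t\}_{t>0}$ I would introduce the linear operator $\mathcal{T}f(x)=(T_tf(x))_{t>0}$, which takes values in $F_\rho$, so that $\mathcal{V}_\rho(T_t)f(x)=\|\mathcal{T}f(x)\|_{F_\rho}$. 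The first task is to verify that $\mathcal{T}$ is a vector-valued Hermite--Calder\'on--Zygmund operator in the sense of Definition \ref{definicion}: the $F_\rho$-valued kernel together with its size and smoothness estimates, and the underlying $L^2$ (or $L^p$) boundedness, are precisely the ingredients established in the $L^p$ theory of \cite{CMMTV,CMTT}, so this step amounts to assembling those estimates into the Calder\'on--Zygmund format.

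Once $\mathcal{T}$ is recognized as such, I would apply the vector-valued form of the criterion (Remark \ref{util}) with $X=F_\rho$, and it remains to check conditions (i) and (ii) for the function $x\mapsto\|\mathcal{T}1(x)\|_{F_\rho}=\mathcal{V}_\rho(T_t)1(x)$. For the heat family the key simplification is that $t\mapsto W_t^H1(x)$ is explicit (via the Mehler kernel it equals $(\cosh 2t)^{-n/2}e^{-\frac12|x|^2\tanh 2t}$) and, for each fixed $x$, decreases from $1$ to $0$; since the $\rho$-variation of a monotone function is dominated by its total variation, one gets $\mathcal{V}_\rho(W_t^H)1(x)\le 1$ for every $x$, which is condition (i). The Poisson case follows from the same bound through the subordination formula, and the Riesz truncations instead require computing $R_H^\varepsilon 1$ and controlling its variation in $\varepsilon$; this last is the most delicate of the three, since $1$ is not annihilated by $H$ and the truncated kernels must be handled directly.

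For condition (ii) I would use that $\|\cdot\|_{F_\rho}$ is a seminorm, so by the reverse triangle inequality
\[
|\mathcal{V}_\rho(T_t)1(x)-\mathcal{V}_\rho(T_t)1(x')|\le\|(T_t1(x)-T_t1(x'))_{t>0}\|_{F_\rho},
\]
which reduces matters to estimating the $\rho$-variation in $t$ of the scalar function $t\mapsto T_t1(x)-T_t1(x')$. Bounding this by the total variation $\int_0^\infty|\partial_t(T_t1(x)-T_t1(x'))|\,dt$ and differentiating the explicit expressions (using the mean value theorem in the spatial variable together with the decay factor $e^{-c\min(|x|^2,|x'|^2)\tanh 2t}$) should yield a global estimate $|\mathcal{V}_\rho(T_t)1(x)-\mathcal{V}_\rho(T_t)1(x')|\le C|x-x'|^\alpha$ for some $0<\alpha\le 1$; by Remark \ref{como aplicar} this gives condition (ii). I expect the main obstacle to be exactly this regularity estimate: the seminorm is nonlinear, the spatial domain is unbounded so the H\"older bound must be uniform in $x$, and in the Riesz case the variation of the truncated kernels in $\varepsilon$ must be estimated by hand rather than by differentiating a closed-form semigroup. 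Checking that $\mathcal{T}$ meets every requirement of Definition \ref{definicion} in the vector-valued setting is the other point demanding care.
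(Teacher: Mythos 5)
For the heat and Poisson families your plan coincides with the paper's: the paper introduces exactly the space $E_\rho$ (your $F_\rho$), applies the vector-valued form of Theorem \ref{criterioT1} (Remark \ref{util}), verifies the size bound for $\|(W_t^H(x,y))_{t>0}\|_{E_\rho}$ and the gradient bound from \cite{CMMTV}, and checks conditions (i)--(ii) on $(W_t^H1(x))_{t>0}$ using the explicit formula \eqref{acosem4}; the Poisson case is then subordinated. Your monotonicity observation (the $\rho$-variation of a monotone function is at most its total variation) is a slightly slicker route to condition (i) than the paper's integration of $|\partial_s W^H_{t(s)}1(x)|$, and is correct. One small logical slip: the vector-valued condition (ii) concerns the oscillation of the $E_\rho$-valued function $x\mapsto (T_t1(x))_{t>0}$, not of the scalar $\mathcal V_\rho(T_t)1$, so your reverse-triangle-inequality display deduces a weaker statement than what is needed; but the estimate you actually propose to prove, namely $\|(T_t1(x)-T_t1(x'))_{t>0}\|_{E_\rho}\le C|x-x'|^\alpha$, is the right one (it is what the paper establishes as $\nabla\mathbb V_\rho(1)\in L^\infty(\mathbb R^n;E_\rho)$).

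The genuine gap is the truncated Riesz family. Your step ``verify that $\mathcal T$ is a vector-valued Hermite--Calder\'on--Zygmund operator \dots this amounts to assembling those estimates into the Calder\'on--Zygmund format'' cannot be carried out there: the $E_\rho$-valued kernel $\big(\chi_{\{|x-y|>\varepsilon\}}R^H(x,y)\big)_{\varepsilon>0}$ fails the smoothness condition (2) of Definition \ref{definicion}. Indeed, for fixed $y$ the difference of the kernels at nearby points $x,x'$ is, as a function of $\varepsilon$, a step function with jumps of size $|R^H(x,y)|\sim |x-y|^{-n}$ at $\varepsilon=|x-y|$ and $\varepsilon=|x'-y|$, so its $E_\rho$-norm is bounded below by $c|x-y|^{-n}$ and can never be $O(|x-x'|\,|x-y|^{-n-1})$. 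The paper's resolution is a separate idea your proposal does not contain: observe that condition (2) enters the proof of Theorem \ref{criterioT1} only through the single estimate \eqref{a sustituir}, and prove \eqref{a sustituir} directly for $U$ (Lemma \ref{Lem:geometrico}) by the geometric argument of \cite{GiTo}, which exploits the near-cancellation of the indicators $\chi_{\{\varepsilon_{j+1}<|x-z|<\varepsilon_j\}}-\chi_{\{\varepsilon_{j+1}<|y-z|<\varepsilon_j\}}$ together with H\"older's inequality and the embedding $\ell^1\subset\ell^{\rho/q}$. The same device is needed again, combined with the comparison of $R^H$ with the auxiliary kernel $R^{(k)}$ whose truncated integrals vanish, to obtain the H\"older-type bound $\|U1(x)-U1(y)\|_{E_\rho}\le C\big[r_0/\gamma(x_k)+(r_0/\gamma(x_k))^{1/q'}\big]$ that gives condition (ii); ``estimating the variation of the truncated kernels by hand'' does not identify either mechanism, and this is where the bulk of the paper's proof lives.
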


It is a remarkable fact that all the operators related to $H$ listed above can be seen as vector valued singular integral operators. Therefore Remark \ref{util} will be very useful.

Some of the operators were considered by J. Dziuba\'nski et al. \cite{DGMTZ} in the more general setting of Schr\"odinger operators of the form $\mathcal{L}=-\Delta+V$ and the $BMO_{\mathcal{L}}$-spaces associated to them in $\mathbb{R}^n$, when $n\geq3$. In such a context, the potential $V$ belongs to $RH_s$, the reverse H\"older class of exponent $s$, for some $s>n/2$. Since polynomials are in $RH_s$ for all $s>0$, the Hermite case $V=|x|^2$ is included. It was proved in \cite{DGMTZ} that the maximal operators related to the heat and Poisson semigroups and the square function defined by the heat semigroup in the Schr\"odinger context are bounded operators on $BMO_{\mathcal{L}}$. The procedure developed in \cite{DGMTZ} exploits, in each case, the underlying relationship between the operator considered and its corresponding Euclidean counterpart. More recently, B. Bongioanni, E. Harboure and O. Salinas have studied Schr\"odinger-Riesz transforms associated to $\mathcal{L}$ in $BMO^\beta_{\mathcal{L}}$-spaces, $0\leq\beta<1$, in dimension $n\geq3$, see \cite{BHS}. In particular, they showed that if $s>n$ then the Schr\"odinger-Riesz transforms $\mathcal{R}_i$ are bounded on $BMO_{\mathcal{L}}$. When $n/2<s<n$ the operators $\mathcal{R}_i$ fail to be bounded in $L^p$ for all $p>p_0$, where $p_0>1$ depends on $s$, see the seminal paper by Z. Shen \cite{Shen}. This implies that the $\mathcal{R}_i$ are not bounded on $BMO_{\mathcal{L}}$ if $n/2<s<n$. Finally, in \cite{AST} it was proved that the (generalized) square functions defined by the Poisson semigroup related to $\mathcal{L}$ are bounded on $BMO_{\mathcal{L}}$, for $n\geq3$.

Boundedness of Harmonic Analysis operators in the Hermite setting is well-developed. In particular, boundedness results in $L^p$ for the related Poisson integrals, the Hermite-Riesz transforms and the square functions can be found in the book by S. Thangavelu \cite{Th2}, see also \cite{StTo}.

We would like to point out that our method in this Hermite case works for every $n\geq1$. One of the main novelties of this paper is the boundedness in $BMO_H(\mathbb{R}^n)$ of the variation operators, Theorem \ref{variacion}. Finally, and perhaps this is a more important observation, Theorem \ref{criterioT1} allows us to consider all the Harmonic Analysis operators related to $H$ in a unified way. The key ingredient will be the vector-valued approach. Moreover, we believe that in the cases of boundedness of the maximal operators, our proofs are easier and faster than those presented in \cite{DGMTZ}.

The outline of the paper is as follows. We collect in Section 2 the main definitions and properties related to the space $BMO_H(\mathbb R^n)$. In Section 3, together with the definitions of Hermite-Calder\'on-Zygmund operator and $T1$, we present the proof of Theorem \ref{criterioT1} and the characterization of pointwise multipliers, Proposition \ref{multiplicador}. Applications are developed in Section \ref{Section:Aplicaciones} (proofs of Theorems \ref{operadores} and \ref{variacion}).

Throughout this paper $C$ and $c$ will always denote suitable positive constants, not necessarily the same in each occurrence. Without mentioning it, we will repeatedly apply the inequality $r^\mu e^{-r}\leq C_\mu e^{-r/2}$, $\mu\geq0$, $r>0$, and the fact that $\log\frac{1+s}{1-s}\sim s$ for $s\sim0$, and $\log\frac{1+s}{1-s}\sim -\log(1-s)$ for $s\sim1$.

\section{The space $BMO_H(\mathbb R^n)$}\label{Section:BMO}

J. Dziuba\'nski et al. defined in \cite{DGMTZ} the space $BMO_{\mathcal{L}}(\mathbb R^n)$ naturally associated to a Schr\"odinger operator $\mathcal{L}=-\Delta+V$ in $\mathbb{R}^n$, $n\geq3$, where the nonnegative potential $V$ satisfies a reverse H\"older inequality. It turns out that $BMO_{\mathcal{L}}(\mathbb{R}^n)$ is the natural replacement of $L^\infty(\mathbb R^n)$ in this context. In fact, $BMO_{\mathcal{L}}(\mathbb{R}^n)$ is the dual of the Hardy space $H^1_{\mathcal{L}}(\mathbb R^n)$ associated to $\mathcal{L}$ defined by J. Dziuba\'nski and J. Zienkiewicz in \cite{DZ}. For the definition of $BMO_H(\mathbb{R}^n)$ we take the space of \cite{DGMTZ} in the particular case of the harmonic oscillator \eqref{oscilador armonico}, i.e. $V(x)=|x|^2$, and we extend the definition to all $n\geq 1$.

A locally integrable function $f$ in $\mathbb R^n$ belongs to $BMO_H(\mathbb R^n)$ if there exists $C>0$ such that
\begin{enumerate}
\item[(i)] $\displaystyle\frac{1}{|B|}\int_B|f(x)-f_B|~dx\leq C$, for every ball $B$ in $\mathbb R^n$, and
\item[(ii)] $\displaystyle\frac{1}{|B|}\int_B|f(x)|~dx\leq C$, for every  $B=B(x_0,r_0)$, where $x_0 \in \mathbb R^n$ and $ r_0 \geq \gamma(x_0)$.
\end{enumerate}
Here $f_{B}=\displaystyle\frac{1}{|B|}\int_Bf(x)~dx$, for every ball $B$ in $\mathbb R^n$, and the \textit{critical radii} function $\gamma$ is given by \eqref{definicion radio critico}. The norm $\|f\|_{BMO_H(\mathbb R^n)}$ of $f$ is defined by
$$ \|f\|_{BMO_H(\mathbb R^n)}= \inf\{C\geq0:\hbox{ (i) and (ii) above hold}\}.$$
Applying the classical John-Nirenberg inequality it can be seen that if in (i) and (ii) $L^1$-norms are replaced by $L^p$-norms, for $1<p<\infty$, then the space $BMO_H(\mathbb R^n)$ does not change and equivalent norms appear, see \cite[Corollary~3]{DGMTZ}.

It is not hard to check that for every $C>0$ there exists $M>0$ such that if $|x-y|\leq C\gamma(x)$ then $\displaystyle\frac{1}{M}\gamma(x) \leq \gamma(y) \leq M\gamma(x)$.
\vskip 0.3cm
\noindent\textbf{Covering by critical balls.} According to \cite[Lemma~2.3]{DZ} there exists a sequence of points $\{x_k\}_{k=1}^\infty$ in $\mathbb R^n$ so that if $Q_k$ denotes the ball with center $x_k$ and radius $\gamma(x_k)$, $k\in \mathbb N$, then
\begin{enumerate}
\item[(i)] $\bigcup_{k=1}^\infty Q_k= \mathbb R^n$, and
\item[(ii)] there exists $N\in \mathbb N$ such that $\operatornamewithlimits{card}\{j\in \mathbb N: Q_j^{**} \cap Q_k^{**} \neq \emptyset \} \leq N$, for every $k \in \mathbb N$.
\end{enumerate}
For a ball $B$, $B^\ast$ denotes the ball with the same center than $B$ and twice radius.
\vskip 0.3cm
\noindent\textbf{Boundedness criterion.} In order to prove that an operator $S$ defined on $BMO_H(\mathbb R^n)$ is bounded from $BMO_H(\mathbb R^n)$ into itself, it suffices to see that there exists $C>0$ such that, for every $f \in BMO_H(\mathbb R^n)$ and $k \in \mathbb N$,
\begin{enumerate}
\item[($A_k$)] $\displaystyle \frac{1}{|Q_k|}\int_{Q_k}|Sf(x)|~dx \leq C\|f\|_{BMO_H(\mathbb R^n)}$, and
\item[($B_k$)] $\|Sf\|_{BMO(Q_k^*)} \leq C\|f\|_{BMO_H(\mathbb R^n)}$, where $BMO(Q_k^*)$ denotes the usual $BMO$ space on the ball $Q_k^*$,
\end{enumerate}
see \cite[p.~346]{DGMTZ}.
\vskip 0.3cm
In the following lemma we present an example of a function in $BMO_H(\mathbb R^n)$ that will be useful in the sequel.

\begin{Lem}\label{deffF}
There exists a positive constant $C>0$ such that, for every $x_0 \in \mathbb R^n$ and $0<s\leq \gamma(x_0)$, the function $f(x;s,x_0)$ defined by
$$f(x;s,x_0) = \chi_{[0,s]}(|x-x_0|)\log\left(\frac{\gamma(x_0)}{s}\right)+ \chi_{(s,\gamma(x_0)]}(|x-x_0|)\log\left(\frac{\gamma(x_0)}{|x-x_0|}\right),\quad x\in \mathbb R^n,$$
belongs to $BMO_H(\mathbb R^n)$ and $\|f(\cdot;s,x_0)\|_{BMO_H(\mathbb R^n)} \leq C$.
\end{Lem}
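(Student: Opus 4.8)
The plan is to verify directly the two defining conditions of $BMO_H(\mathbb R^n)$, with constants independent of $s$ and $x_0$. The starting point is the observation that, writing $\gamma=\gamma(x_0)$, the function is a \emph{double truncation} of a dilated logarithm:
\[
f(x;s,x_0)=\min\Big\{\Big(\log\frac{\gamma}{|x-x_0|}\Big)^{+},\ \log\frac{\gamma}{s}\Big\},\qquad x\in\mathbb R^n ,
\]
as one checks by comparing the three regions $|x-x_0|\leq s$, $s<|x-x_0|\leq\gamma$ and $|x-x_0|>\gamma$. This identity is the key that turns an apparently $s$-dependent family into a single classical $BMO$ object, and it reduces the whole statement to the behaviour of $\log\frac1{|\cdot|}$.

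For condition (i) (the usual $BMO$ oscillation over \emph{every} ball) I would argue as follows. Recall the classical fact that $y\mapsto\log\frac1{|y|}$ belongs to $BMO(\mathbb R^n)$ with a purely dimensional seminorm. Since the $BMO$ seminorm is invariant under translations and dilations, the function $u(x)=\log\frac{\gamma}{|x-x_0|}$ satisfies $\|u\|_{BMO(\mathbb R^n)}=\|\log\frac1{|\cdot|}\|_{BMO(\mathbb R^n)}=:c_n$, independent of $s$ and $x_0$. It then remains to see that truncation is uniformly bounded on $BMO$: for any constant $a$ the map $t\mapsto\max(t,a)$ is $1$-Lipschitz, so on each ball $B$, choosing the constant $\max(u_B,a)$, one gets $\frac1{|B|}\int_B|\max(u,a)-\max(u_B,a)|\,dx\leq\frac1{|B|}\int_B|u-u_B|\,dx$, whence $\|\max(u,a)\|_{BMO}\leq 2\|u\|_{BMO}$, and symmetrically for $\min(\cdot,b)$. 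Applying this with $a=0$ and $b=\log\frac{\gamma}{s}$ to the representation above yields condition (i) with constant $\leq 4c_n$.

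Condition (ii) is handled by a direct size estimate. With $\omega_n$ the surface measure of the unit sphere, polar coordinates give
\[
\int_{\mathbb R^n}f(x;s,x_0)\,dx=\frac{\omega_n}{n}\,s^n\log\frac{\gamma}{s}+\omega_n\int_s^{\gamma}\log\frac{\gamma}{r}\,r^{n-1}\,dr\leq C_n\,\gamma^n ,
\]
using $t^n\log\frac1t\leq\frac1{en}$ on $(0,1]$ for the first term, and the substitution $r=\gamma\tau$ together with $\int_0^1\tau^{n-1}\log\frac1\tau\,d\tau=\frac1{n^2}$ for the second. Now let $B=B(y_0,r_0)$ with $r_0\geq\gamma(y_0)$. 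If $\overline{B(x_0,\gamma)}\cap B=\emptyset$ the average of $|f|$ vanishes; otherwise the two balls share a point, so $|x_0-y_0|\leq\gamma(x_0)+r_0$. If $r_0\geq\gamma(x_0)$, then $\frac1{|B|}\int_B|f|\leq C_n\gamma^n/(|B(0,1)|\,r_0^n)\leq C$. If instead $r_0<\gamma(x_0)$, then $|x_0-y_0|<2\gamma(x_0)$, and the doubling property of $\gamma$ recorded before the statement gives $r_0\geq\gamma(y_0)\geq\gamma(x_0)/M$, which again bounds the average by a constant. This finishes (ii), and hence $\|f(\cdot;s,x_0)\|_{BMO_H(\mathbb R^n)}\leq C$.

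The polar-coordinate integral and the geometric case analysis in (ii) are routine; the one step deserving care is the reformulation of $f$ as the double truncation above, since it is exactly what lets the $s$- and $x_0$-dependence be absorbed uniformly by translation/dilation invariance and the elementary truncation bound for $BMO$.
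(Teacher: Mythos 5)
Your proof is correct and follows essentially the same route as the paper: identify $f(\cdot;s,x_0)$ as a translated, dilated and truncated logarithm to get the uniform $BMO$ oscillation bound, then control averages over large balls via $\int f\leq C_n\gamma(x_0)^n$ and the case analysis using the doubling property of $\gamma$. The only (welcome) difference is that you supply the elementary $1$-Lipschitz truncation lemma showing $\min/\max$ with a constant at most doubles the $BMO$ seminorm, a fact the paper merely asserts for $h_{R,S}$.
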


\begin{proof}
Recall that the function $h(x)=\log\left(\frac{1}{|x|}\right)\chi_{[0,1]}(|x|)$ is in $BMO(\mathbb R^n)$, see \cite[p.~520]{Gra}. Hence, for every $R>0$, the function $h_R$ given by
$$h_R(x)=h(x/R), \;\; x \in \mathbb R^n,$$
is in $BMO(\mathbb R^n)$ and $\|h_R\|_{BMO(\mathbb R^n)} \leq C$, where $C$ is independent of $R$. Moreover, for every $R,S>0$, the function $h_{R,S}$ defined by
$$h_{R,S}(x) = \min\{S, h(x/R)\},\;\; x \in \mathbb R^n,$$
belongs to $BMO(\mathbb R^n)$ and $\|h_{R,S}\|_{BMO(\mathbb R^n)}\leq C$, where $C$ does not depend on $R$ and $S$. Then, since for every $x_0 \in \mathbb R^n$ and
$0<s\leq \gamma(x_0)$,
$$f(x;s,x_0) = h_{\gamma(x_0),\log\frac{\gamma(x_0)}{s}}(x-x_0),\;\; x\in \mathbb R^n,$$
the function $f(\cdot;s,x_0) \in BMO(\mathbb R^n)$ and $\|f(\cdot;s,x_0)\|_{BMO(\mathbb R^n)}\leq C$, $x_0 \in \mathbb R^n$ and $0<s<\gamma(x_0)$. It only remains to control the means of $f(\cdot;s,x_0)$ on \textit{large} balls. For that let us first note that
 \begin{align*}
    |f|_{B(x_0,\gamma(x_0))} &= \frac{1}{|B(x_0,\gamma(x_0))|}\int_{B(x_0,\gamma(x_0))}f(x;s,x_0)~dx \\
     &\leq \frac{C}{\gamma(x_0)^n}\left[s^n\log\left(\frac{\gamma(x_0)}{s}\right)+\int_{s<|z|<\gamma(x_0)} \log\left(\frac{\gamma(x_0)}{|z|}\right)~dz\right]\leq C,
 \end{align*}
 where $C$ is independent of $s$ and $x_0$. Let $B=B(z_0,r_0)$, $x_0\in\mathbb R^n$ and $r_0\geq\gamma(z_0)$. We can always assume that $B\cap B(x_0,\gamma(x_0))\neq\emptyset$, since the support of $f$ is the closure of the ball $B(x_0,\gamma(x_0))$. Consider first the easier case: when $r_0\geq\gamma(x_0)$. Then we clearly have $|f|_{B}\leq|f|_{B(x_0,\gamma(x_0))}$ and the computation above applies. On the other hand, if $r_0\leq\gamma(x_0)$, we have that $|x_0-z_0|\leq2\gamma(x_0)$ and by the properties of $\gamma$ given above, $\gamma(x_0)\sim\gamma(z_0)$. Using this last fact and the previous observation, we get $\displaystyle |f|_B\leq\frac{|B(x_0,\gamma(x_0))|}{|B(z_0,\gamma(z_0))|}|f|_{B(x_0,\gamma(x_0))}\leq C$. The proof is complete.
\end{proof}

\section{Proof of Theorem \ref{criterioT1} and characterization of pointwise multipliers}\label{Section:Proofs}

\subsection{On the $T1$-criterion: Theorem \ref{criterioT1}}

Before proving Theorem \ref{criterioT1}  we need to precise the definition of the operator $T$ we are considering.

We denote by $L^2_c(\mathbb R^n)$ the set of functions $f\in L^2(\mathbb R^n)$ whose support $\operatorname{supp}(f)$ is a compact subset of $\mathbb R^n$.

\begin{Def}\label{definicion}
Let $T$ be a bounded linear operator on $L^2(\mathbb R^n)$ such that
$$Tf(x) = \int_{\mathbb R^n}K(x,y)f(y)~ dy, \;\; f \in L^2_c(\mathbb R^n) \;\; \mbox{and} \;\; \mbox{a.e.} \;\;x \notin\operatorname{supp}(f).$$
We shall say that $T$ is a Hermite-Calder\'on-Zygmund operator if
\begin{enumerate}
\item[(1)] $\displaystyle |K(x,y)|\leq\frac{C}{|x-y|^n}~e^{-c\left[|x||x-y|+|x-y|^2\right]}$, for all $x,y \in \mathbb R^n$ with $x\neq y$,
\item[(2)] $\displaystyle|K(x,y)-K(x,z)|+|K(y,x)-K(z,x)| \leq C\frac{|y-z|}{|x-y|^{n+1}}$, when $|x-y|>2|y-z|.$
\end{enumerate}
\end{Def}

Note that every Hermite-Calder\'on-Zygmund operator is also a classical Calder\'on-Zygmund operator, see \cite{Gra}. Examples of Hermite-Calder\'on-Zygmund operators are given in Section \ref{Section:Aplicaciones}.
\vskip 0.3cm
\noindent\textbf{Definition of $Tf$ for $f\in BMO_H(\mathbb R^n)$.} Suppose firstly that $f \in L^2(\mathbb R^n)$. For every $R>0$, let $B_R:=B(0,R)$. We can write
$$Tf= T\left(f\chi_{B_R}\right)+ T\left(f\chi_{B_R^c}\right) = T\left(f\chi_{B_R}\right)+ \lim_{n \rightarrow \infty}T\left(f\chi_{B_R^c \cap B_n}\right)$$
where the limit is understood in $L^2(\mathbb R^n)$. This last identity suggests to define the operator $T$ on $BMO_H(\mathbb R^n)$ as follows. Assume that $f\in BMO_H(\mathbb R^n)$ and $R>1$. By using the Hermite-Calder\'on-Zygmund condition (1) for $K$ we get

\begin{align*}
\int_{B_{2R}^c}|K(x,y)||f(y)|~dy &\leq C \sum^\infty_{j=1} \int_{2^jR<|y|\leq 2^{j+1}R}\frac{e^{-c|x-y|^2}}{|x-y|^n}|f(y)|~dy\\
&\leq C \sum^\infty_{j=1} \int_{2^jR<|y|\leq 2^{j+1}R}\frac{1}{|x-y|^{n+1}}|f(y)|~dy \\
&\leq C \sum^\infty_{j=1} \frac{1}{(2^jR)^{n+1}}\int_{|y| \leq 2^{j+1}R} |f(y)|~dy \leq \frac{C}{R}~ \|f\|_{BMO_H(\mathbb R^n)},
\end{align*}
for every $x\in B_R$. Moreover, if $R<S$ we have
\begin{align*}
T\left(f\chi_{B_S}\right)(x) -T\left(f\chi_{B_R}\right)(x) &= T\left(f\chi_{B_S \setminus B_R}\right)(x)= \int_{B_S \setminus B_R} K(x,y)f(y)~ dy\\
& =  \int_{B_R^c} K(x,y)f(y)~ dy - \int_{B_S^c} K(x,y)f(y)~ dy, \;\mbox{a.e.}\; x\in B_R.
\end{align*}
We define
$$
\mathbb Tf(x) = T\left(f\chi_{B_R}\right)(x) + \int_{B_R^c} K(x,y)f(y)~ dy, \;\;\mbox{a.e.}\; x \in B_R ~\mbox{and}~ R>1.
$$
Note that the definition of $\mathbb Tf$ above is consistent in the choice of $R>1$ in the sense that if $S>R>1$ then the definition using $B_S$ coincides almost everywhere in $B_R$ with the one just given.

Let us derive an expression for $\mathbb Tf$ where $\mathbb T1$ appears that will be useful for the proof of our main result. Let $x_0 \in \mathbb R^n$ and $r_0 >0$. For $B=B(x_0,r_0)$ we write
\begin{equation}\label{descomposicion f}
f=(f-f_B)\chi_{B^*}+ (f-f_B)\chi_{(B^*)^c}+f_B=:f_1+f_2+f_3.
\end{equation}
Let us choose $R>0$ such that $B^* \subset B_R$. Using \eqref{descomposicion f} we get
\begin{align}\label{defBMO}
\mathbb Tf(x)&= T\left(f\chi_{B_R}\right)(x) + \int_{B_R^c}  K(x,y)f(y)~ dy \nonumber \\
&= T\left((f-f_B)\chi_{B^*}\right)(x) + T\left((f-f_B)\chi_{B_R\setminus B^*}\right)(x) + f_BT\left(\chi_{B_R}\right)(x) \nonumber \\
&\quad+ \int_{B^c_R} K(x,y) (f(y)-f_B)~dy + f_B\int_{B_R^c}K(x,y) ~dy \nonumber\\
&= T\left((f-f_B)\chi_{B^*}\right)(x) + \int_{(B^*)^c}K(x,y)(f(y)-f_B)~ dy + f_B\mathbb T1(x),
\end{align}
almost everywhere $x\in B^*$.
\vskip 0.3cm
\noindent\textbf{Proof of Theorem \ref{criterioT1}.} First we shall see that conditions (i) and (ii) on $T1$ imply that $T$ is bounded from $BMO_H(\mathbb R^n)$ into itself. In order to do this we will show that there exists $C>0$ such that the properties ($A_k$) and ($B_k$) stated in Section \ref{Section:BMO} hold for every $k \in \mathbb N$ and $f \in BMO_H(\mathbb R^n)$ when the operator $\mathbb T$ is considered.

We start with ($A_k$). According to \eqref{defBMO},
$$\mathbb Tf(x)=  T\left((f-f_{Q_k})\chi_{Q_k^*}\right)(x) + \int_{(Q_k^*)^c}K(x,y)(f(y)-f_{Q_k})~dy+f_{Q_k}\mathbb T1(x),$$
almost everywhere $x \in Q_k$. As $T$ maps $L^2(\mathbb R^n)$ into $L^2(\mathbb R^n)$, by using H\"older's inequality and \cite[Corollary~3]{DGMTZ},
\begin{align*}
\frac{1}{|Q_k|}\int_{Q_k} \left|T\left((f-f_{Q_k})\chi_{Q_k^*}\right)(x)\right|dx &\leq   C \left(\frac{1}{|Q_k|}\int_{Q_k}\left|T\left((f-f_{Q_k})\chi_{Q_k^*}\right)(x)\right|^2dx\right)^{1/2}\\
&\leq C\left(\frac{1}{|Q_k|}\int_{Q_k^*}\left|f(x)-f_{Q_k}\right|^2dx\right)^{1/2} \leq C\|f\|_{BMO_H(\mathbb R^n)}.
\end{align*}
On the other hand, given $x \in Q_k$, by the size condition (1) of the kernel $K$ it can be checked in a standard way, see for instance \cite{Gra},  that
$$\left|\int_{(Q_k^*)^c} K(x,y)\big(f(y)-f_{Q_k}\big)~ dy\right| \leq C\|f\|_{BMO_H(\mathbb R^n)}.$$
Finally, since (i) holds, we have
$$\frac{1}{|Q_k|}\int_{Q_k}\big|f_{Q_k}\mathbb T1(x)\big|~dx = |f_{Q_k}|\frac{1}{|Q_k|}\int_{Q_k}|\mathbb T1(x)|~dx \leq C \|f\|_{BMO_H(\mathbb R^n)}.$$
Hence, we conclude that ($A_k$) holds for $\mathbb T$ with a constant $C>0$ that does not depend on $k$.

Now we have to prove that $\mathbb T$ satisfies ($B_k$) for a certain $C>0$ that it is independent of $k$. Let $B=B(x_0,r_0)\subseteq Q_k^*$, where $x_0 \in \mathbb R^n$ and $r_0 >0$. Note that if $r_0 \geq \gamma (x_0),$  then $\gamma(x_0) \sim \gamma(x_k) \sim r_0,$  hence proceeding as above we will have
$$\frac{1}{|B|}\int_B \left|\mathbb Tf(x)- (\mathbb Tf)_{B}\right|dx \leq \frac{2}{|B|} \int_B |\mathbb Tf(x)|~dx \leq C\|f\|_{BMO_H(\mathbb R^n)},$$
as soon as we have checked that $\displaystyle\frac{1}{|B|}\int_B|T1(x)|~dx\leq C$. In the definition of $T1$ we can write
$$T1(x)=T(\chi_{Q_k^{**}})(x)+\int_{(Q_k^{**})^c}K(x,y)~dy,\qquad x\in Q_k^\ast.$$
Hence, by hypothesis (i) on $T1$, H\"older's inequality and the size condition (1) on the kernel $K$,
\begin{multline*}
    \frac{1}{|B|}\int_{B}|T1(x)|~dx \leq \frac{C}{|Q_k|}\int_{Q_k}|T1(x)|~dx+\frac{C}{|Q_k^{*}|}\int_{Q_k^\ast\setminus Q_k}|T1(x)|~dx \\
     \leq C+\left(\frac{C}{|Q_k^{*}|}\int_{Q_k^\ast}|T(\chi_{Q_k^{**}})(x)|^2~dx\right)^{1/2}+\int_{Q_k^\ast\setminus Q_k}\int_{(Q_k^{**})^c}K(x,y)~dy~dx\leq C.
\end{multline*}
Assume that $r_0 < \gamma(x_0)$. Using \eqref{defBMO} we have that
\begin{align*}
\frac{1}{|B|}\int_B |\mathbb Tf(x) -(\mathbb Tf)_{B}|~dx & \leq \frac{1}{|B|}\int_B\frac{1}{|B|}\int_B |Tf_1(x) - Tf_1(z)|~dz ~dx \\
&\quad + \frac{1}{|B|}\int_B\frac{1}{|B|}\int_B |F_2(x)-F_2(z)|~dz ~dx \\
&\quad+ \frac{1}{|B|}\int_B |\mathbb Tf_3(x) - (\mathbb Tf_3)_{B}|~dx \\
&=: L_1+L_2+L_3,
\end{align*}
where we defined
$$F_2(x)= \int_{(B^*)^c} K(x,y)f_2(y)~dy,\;\; x \in B,$$
and $f=f_1+f_2+f_3$ as in \eqref{descomposicion f}. By H\"older's inequality and the boundedness in $L^2(\mathbb R^n)$ of $T$,
$$L_1\leq \frac{2}{|B|}\int_B |Tf_1(x)|~dx \leq C \left(\frac{1}{|B|} \int_{B^*}|f(x)-f_B|^2~dx\right)^{1/2} \leq C\|f\|_{BMO_H(\mathbb R^n)}.$$
It is well-known, see for instance \cite{Gra}, that the smoothness property (2) of the kernel $K$ implies that
\begin{equation}\label{a sustituir}
|F_2(x) -F_2(z)| \leq C\|f\|_{BMO_H(\mathbb R^n)},\;\; x,z\in  B.
\end{equation}
Therefore,  $L_2 \leq C\|f\|_{BMO_H(\mathbb R^n)}$. Finally, by using the assumption (ii) on $\mathbb T1$ and \cite[Lemma~2]{DGMTZ}, it follows that
\begin{align*}
L_3 &= |f_B| \frac{1}{|B|}\int_B|\mathbb T1(x) - (\mathbb T1)_{B}|~dx\\
&\leq  C\|f\|_{BMO_H(\mathbb R^n)}\left(1+\log\frac{\gamma(x_0)}{r_0}\right)\frac{1}{|B|}\int_B|\mathbb T1(x) - (\mathbb T1)_{B}|~dx \\
&\leq  C\|f\|_{BMO_H(\mathbb R^n)}.
\end{align*}
Hence, we conclude that $\displaystyle\frac{1}{|B|}\int_B|\mathbb Tf(x) - (\mathbb Tf)_{B}|~dx \leq C\|f\|_{BMO_H(\mathbb R^n)}$ for all $B\subset Q_k^\ast$ and ($B_k$) is proved.

Let us now prove the converse statement. Suppose that $T$ is a bounded operator from $BMO_H(\mathbb R^n)$ into itself. Since the function $g(x)=1$, $x\in\mathbb R^n$, belongs to $BMO_H(\mathbb R^n)$, $\mathbb T1$ is in $BMO_H(\mathbb R^n)$. Then property (i) holds and there exists $C>0$ such that, for every ball $B$,
$$\frac{1}{|B|}\int_B|\mathbb T1(y)-(\mathbb T1)_B|~dy\leq C.$$
Let $x_0\in\mathbb R^n$ and $0<s<\gamma(x_0)$. Consider the function $f(\cdot;s,x_0)$ defined in Lemma \ref{deffF}. Following the argument used in the estimate for the term $L_3$ in the proof of the first part of this Theorem and using the fact that $f(\cdot;s,x_0)\in BMO_H(\mathbb R^n)$, we can find a constant $C>0$ that does not depend on $s$ and $x_0$ such that
$$\log\left(\frac{\gamma(x_0)}{s}\right)\frac{1}{|B(x_0,s)|}\int_{B(x_0,s)}|\mathbb T1(y)-(\mathbb T1)_B|~dy\leq C.$$
Then, condition (ii) holds and the proof of Theorem \ref{criterioT1} is complete.\qed

\subsection{Pointwise multipliers in $BMO_H(\mathbb R^n)$}

\begin{Prop}\label{multiplicador}
Let $g$ be a measurable function on $\mathbb R^n$. We denote by $T_g$ the multiplier operator defined by $T_g(f)=fg$. Then $T_g$ is a bounded operator in $BMO_H(\mathbb R^n)$ if and only if
\begin{enumerate}
\item[(i)] $g \in L^\infty(\mathbb R^n)$; and
\item[(ii)] there exists $C>0$ such that
$$\log\left(\frac{\gamma(x)}{s}\right)\frac{1}{|B(x,s)|}\int_{B(x,s)}|g(y)-g_{B(x,s)}|~dy \leq C,$$
for every $x \in \mathbb R^n$ and every ball $B(x,s)$ with radius $0<s\leq\gamma(x)$, where $\gamma$ is given in \eqref{definicion radio critico}.
\end{enumerate}
\end{Prop}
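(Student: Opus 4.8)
The plan is to prove both implications, leaning on the already established Theorem \ref{criterioT1} for sufficiency and, for necessity, on a power/self-improvement trick together with the test functions of Lemma \ref{deffF}.

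For the sufficiency I would first note that when $g\in L^\infty(\mathbb R^n)$ the operator $T_g$ is bounded on $L^2(\mathbb R^n)$, since $\|T_gf\|_2\le\|g\|_\infty\|f\|_2$, and that it admits the kernel $K\equiv0$: indeed $T_gf(x)=g(x)f(x)=0$ for a.e.\ $x\notin\operatorname{supp}(f)$. Hence the size and smoothness conditions (1) and (2) of Definition \ref{definicion} hold trivially, so $T_g$ is a Hermite-Calder\'on-Zygmund operator. Since $1\in BMO_H(\mathbb R^n)$, one has $\mathbb T_g1=g$, and it only remains to verify the two hypotheses of Theorem \ref{criterioT1} for $g$. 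The first is immediate from (i), because $\frac1{|B(x,\gamma(x))|}\int_{B(x,\gamma(x))}|g|\le\|g\|_\infty$. For the second, writing $\operatorname{osc}_{B(x,s)}(g)=\frac1{|B(x,s)|}\int_{B(x,s)}|g-g_{B(x,s)}|$, I would split $\big(1+\log(\gamma(x)/s)\big)\operatorname{osc}_{B(x,s)}(g)=\operatorname{osc}_{B(x,s)}(g)+\log(\gamma(x)/s)\operatorname{osc}_{B(x,s)}(g)$; the first summand is at most $2\|g\|_\infty$ by (i) and the second is bounded by (ii). Thus condition (ii) of Theorem \ref{criterioT1} holds and $T_g$ is bounded on $BMO_H(\mathbb R^n)$.

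For the necessity, assume $T_g$ is bounded with norm $C_0$. Condition (ii) is the easy half. Taking the test function $f(\cdot;s,x_0)$ of Lemma \ref{deffF}, which equals the constant $\log(\gamma(x_0)/s)$ on $B(x_0,s)$, I would use that $T_gf=gf\in BMO_H(\mathbb R^n)$ with $\|gf\|_{BMO_H}\le C_0\|f\|_{BMO_H}\le C$. Since $(gf)_{B(x_0,s)}=\log(\gamma(x_0)/s)\,g_{B(x_0,s)}$, the first defining condition of $BMO_H$ applied to $gf$ on $B(x_0,s)$ gives exactly
\[
\log\Big(\frac{\gamma(x_0)}{s}\Big)\frac1{|B(x_0,s)|}\int_{B(x_0,s)}|g(y)-g_{B(x_0,s)}|\,dy\le\|gf\|_{BMO_H}\le C,
\]
which is (ii) (the case $s=\gamma(x_0)$ being trivial).

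The main obstacle, and the heart of the argument, is proving (i), namely $g\in L^\infty(\mathbb R^n)$, for which the plan is a power trick exploiting the averaging condition built into the $BMO_H$ norm. Since $1\in BMO_H(\mathbb R^n)$, $g=T_g1\in BMO_H(\mathbb R^n)$; iterating the multiplier, $g^k=T_g(g^{k-1})\in BMO_H(\mathbb R^n)$ with $\|g^k\|_{BMO_H}\le C_0\|g^{k-1}\|_{BMO_H}\le C_0^{\,k}\|1\|_{BMO_H}$ for every $k\ge1$. Now, for a critical ball $Q=B(x_0,\gamma(x_0))$, the second defining condition of $BMO_H$ (which controls averages on balls of radius at least the critical one) yields $\frac1{|Q|}\int_Q|g|^k=\frac1{|Q|}\int_Q|g^k|\le\|g^k\|_{BMO_H}\le C_0^{\,k}\|1\|_{BMO_H}$. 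Taking $k$-th roots and letting $k\to\infty$, the left-hand side tends to $\|g\|_{L^\infty(Q)}$, so $\|g\|_{L^\infty(Q)}\le C_0$; since the critical balls cover $\mathbb R^n$ this gives $g\in L^\infty(\mathbb R^n)$ with $\|g\|_\infty\le C_0$. The points requiring care are the legitimacy of the iteration $g^k\in BMO_H$ at each step (guaranteed because $T_g$ maps $BMO_H$ into itself) and the standard convergence of the $L^k$-averages over the fixed finite-measure ball $Q$ to the essential supremum.
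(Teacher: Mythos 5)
Your proof is correct. The sufficiency direction and the verification of condition (ii) in the necessity direction coincide with the paper's: the paper likewise treats $T_g$ as a Hermite-Calder\'on-Zygmund operator with kernel $K\equiv 0$ and runs the argument of Theorem \ref{criterioT1}, and it likewise tests the $BMO_H$ bound for $g\,f(\cdot;s,x_0)$ on the ball $B(x_0,s)$, where the function of Lemma \ref{deffF} is constantly equal to $\log(\gamma(x_0)/s)$, to extract (ii). Where you genuinely diverge is in proving $g\in L^\infty(\mathbb R^n)$. The paper uses the same test function once more: for $B=B(x_0,s)$ with $0<s<\gamma(x_0)/2$ it writes $\log(\gamma(x_0)/s)\,|g|_B=|fg|_B\le \frac{1}{|B|}\int_B|fg-(fg)_B|+(fg)_B$, bounds $(fg)_B$ by $C\log(\gamma(x_0)/s)\|fg\|_{BMO_H(\mathbb R^n)}$ via \cite[Lemma~2]{DGMTZ}, divides by the logarithm to get $|g|_B\le C$ uniformly over small balls, and concludes by Lebesgue differentiation. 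Your power trick $\|g^k\|_{BMO_H(\mathbb R^n)}=\|T_g^k1\|_{BMO_H(\mathbb R^n)}\le C_0^k$, combined with the averaging condition of $BMO_H$ on the critical ball $B(x_0,\gamma(x_0))$ and the convergence of normalized $L^k$-averages on a finite-measure set to the essential supremum (which holds, by the usual lower-bound argument on level sets, even before one knows $g$ is bounded), is a legitimate and classical alternative of Stegenga/Janson type. It avoids both the test function and the DGMTZ lemma at this step and gives the sharp bound $\|g\|_{L^\infty(\mathbb R^n)}\le\|T_g\|$; the paper's route instead isolates precisely which feature of the $BMO_H$ norm (the logarithmic control of means on subcritical balls) is responsible for the boundedness of $g$, at the cost of quoting an external lemma. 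Both arguments are complete.
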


\begin{Rm}
Condition $\mathrm{(ii)}$ in Proposition \ref{multiplicador} is fulfilled, for instance, when there exists $0<\alpha\leq 1$ such that $|g(x)-g(y)| \leq C|x-y|^\alpha$, $x,y \in \mathbb R^n$.
\end{Rm}

\begin{Rm}\label{T1 y multiplicador}
If for some Hermite-Calder\'on-Zygmund operator $T$ we have that $T1$ defines a pointwise multiplier in $BMO_H(\mathbb R^n)$ then the proposition above and Theorem \ref{criterioT1} imply that $T$ is a bounded operator on $BMO_H(\mathbb R^n)$.
\end{Rm}

\begin{proof}[Proof of Proposition \ref{multiplicador}]
If $g$ is a measurable function in $\mathbb R^n$ satisfying the properties (i) and (ii) in Proposition \ref{multiplicador} we can proceed as in the proof of Theorem \ref{criterioT1} to see that $g$ defines a pointwise multiplier in $BMO_H(\mathbb R^n)$ (note that the kernel of the operator $T=T_g$ is zero).

Suppose next that $g$ is a pointwise multiplier in $BMO_H(\mathbb R^n)$. For the function $f(\cdot;s,x_0)$ defined in Lemma \ref{deffF} and any ball $B=B(x_0,s)$ with $0<s<\frac{\gamma(x_0)}{2}$, by using \cite[Lemma~2]{DGMTZ}, we have
\begin{align*}
    \log\left(\frac{\gamma(x_0)}{s}\right)\frac{1}{|B|}\int_B|g(x)|~dx &= \frac{1}{|B|}\int_B|f(x)g(x)|~dx \\
     &\leq \frac{1}{|B|}\int_B|(fg)(x)-(fg)_B|~dx+(fg)_B \\
     &\leq C\|f\|_{BMO_H(\mathbb R^n)}+\log\left(\frac{\gamma(x_0)}{s}\right)\|fg\|_{BMO_H(\mathbb R^n)} \\
     &\leq C\log\left(\frac{\gamma(x_0)}{s}\right)\|f\|_{BMO_H(\mathbb R^n)},
\end{align*}
hence $|g|_B\leq C$ with $C$ independent of $B$. Therefore, $g$ is bounded. On the other hand, if $x_0\in \mathbb R^n$ and $0<s<\gamma(x_0)$ we have that
\begin{align*}
\lefteqn{\log\left(\frac{\gamma(x_0)}{s}\right)\frac{1}{|B(x_0,s)|}\int_{B(x_0,s)}|g(x)-g_{B(x_0,s)}| ~dx}\\
&\qquad= \frac{1}{|B(x_0,s)|}\int_{B(x_0,s)}|g(x)f(x;s,x_0)-(gf(\cdot;s,x_0))_{B(x_0,s)}|~ dx\\
&\qquad\leq \|gf(\cdot;s,x_0)\|_{BMO_H(\mathbb R^n)} \leq C\|f(\cdot;s,x_0)\|_{BMO_H(\mathbb R^n)} \leq C.
\end{align*}
The constants $C>0$ appearing in this proof do not depend on $x_0 \in \mathbb R^n$ and $0< s < \gamma(x_0)$.
\end{proof}

\section{Applications}\label{Section:Aplicaciones}

Let us recall some definitions and properties of the operators related to the harmonic oscillator, see \cite{Th2}.

According to Mehler's formula \cite[p.~2]{Th2} the heat semigroup $\{W_t^H\}_{t>0}$ generated by $-H$ is given, for every $f \in L^2(\mathbb R^n)$, by
\begin{equation}\label{Hermitecalor}
W_t^Hf(x)\equiv e^{-tH}f(x)=\int_{\mathbb R^n} W_t^H(x,y)f(y)~dy,\;\; x \in \mathbb R^n\; \mbox{and}\;\;t>0,
\end{equation}
where
$$W_t^H(x,y) =\left(\frac{e^{-2t}}{\pi(1-e^{-4t})}\right)^{n/2} e^{-\frac{1}{2}\left[\frac{1+e^{-4t}}{1-e^{-4t}}\left(|x|^2+|y|^2\right)
-\frac{4e^{-2t}}{1-e^{-4t}}~x\cdot y\right]},\quad t>0,~x,y\in\mathbb R^n.$$
Applying S. Meda's change of parameters $t=t(s)=\frac{1}{2}\log\frac{1+s}{1-s}$, $0<s<1$, $t>0$, we obtain the following expression of the kernel of $W_{t(s)}^H$:
\begin{equation}\label{Meda}
W_{t(s)}^H(x,y)=\left(\frac{1-s^2}{4\pi s}\right)^{n/2}e^{-\frac{1}{4}\left[s|x+y|^2+\frac{1}{s}|x-y|^2\right]},\;\; x,y \in \mathbb R^n\;\hbox{and}\; s\in(0,1).
\end{equation}
The semigroup $\{W_t^H\}_{t>0}$ is contractive in $L^p(\mathbb R^n)$, $1\leq p\leq \infty$, and selfadjoint in $L^2(\mathbb R^n)$ but it is not Markovian. Moreover, for every $f\in L^p(\mathbb R^n)$, $1\leq p < \infty$, $\displaystyle \lim_{t \rightarrow 0^+}W_t^Hf(x) = f(x)$ in $L^p(\mathbb{R}^n)$ and a.e. $x \in \mathbb R^n$.

The Poisson semigroup associated to $H$ is given by \textit{Bochner's subordination formula}:
\begin{equation}\label{Poisson}
P_t^Hf(x)\equiv e^{-t\sqrt{H}}f(x)=\frac{1}{\Gamma(1/2)}\int_0^\infty e^{-\frac{t^2}{4u}H}f(x)~e^{-u}~\frac{du}{u^{1/2}},\;\; t>0.
\end{equation}

Suppose now that $f \in BMO_H(\mathbb R^n)$. Clearly for every $t \in (0,\infty)$ and $x \in \mathbb R^n$ the integral
$$\int_{\mathbb R^n} W_t(x,y) f(y) ~dy$$
is absolutely convergent. We define $W_t^Hf$ and $P_t^Hf$, $t>0$, by (\ref{Hermitecalor}) and (\ref{Poisson}) respectively.
\vskip0.2cm
In the following subsections we prove Theorems \ref{operadores} and \ref{variacion}.

\subsection{Maximal operators for the heat and Poisson semigroups associated with the Hermite operator.}\label{Subsection:Maximales}

Our systematic method developed in this paper (Theorem \ref{criterioT1}) allows us to show that the maximal operators $W_*^H$ and $P_\ast^H$, defined by $W_*^Hf=\sup_{t>0}|W_t^Hf|$ and $P_*^Hf=\sup_{t>0}|P_t^Hf|$, are bounded from $BMO_H(\mathbb R^n)$ into itself, for every $n \in \mathbb N$.

The leading idea is to express the operators we are dealing with in such a way that the vector-valued setting can be applied, see Remark \ref{util}. Indeed, it is clear that $W_*^Hf=\|W_t^Hf\|_E$, with $E=L^\infty((0,\infty),dt)$. Hence, to see that $W_*^H$ maps $BMO_H(\mathbb R^n)$ into itself it is enough to show that
$$\hbox{the operator}~V(f): = (W_t^Hf)_{t>0}~\hbox{is bounded from}~BMO_H(\mathbb R^n)~\hbox{into}~BMO_H(\mathbb R^n;E).$$
Here $BMO_H(\mathbb R^n;E)$ is defined in the obvious way by replacing the absolute values $|\cdot|$ by norms $\|\cdot\|_E$. It is well-known that $V$ is bounded from $L^2(\mathbb R^n)$ into $L^2(\mathbb R^n;E)$, see \cite{StTo}. The desired boundedness result can be deduced from Remark \ref{como aplicar} and the following

\begin{Prop}\label{maximal}
There exist positive constants $C$ and $c$ such that
\begin{enumerate}
\item[(i)] $\displaystyle\|W_t^H(x,y)\|_E \leq \frac{C}{|x-y|^n}~e^{-c\left[|x-y|^2+|x||x-y|\right]}$, $x,y\in \mathbb R^n$, $x \neq y$;\vspace{3mm}
\item[(ii)] $\displaystyle\left\|\nabla_xW_t^H(x,y)\right\|_E \leq \frac{C}{|x-y|^{n+1}}$, $x,y \in \mathbb R^n$, $x \neq y$.\vspace{3mm}
\item[(iii)] Moreover, $\displaystyle\|W_t^H1\|_E \in L^\infty(\mathbb R^n)$ and $\displaystyle\left\|\nabla W_t^H1\right\|_E \in L^\infty(\mathbb R^n)$.
\end{enumerate}
\end{Prop}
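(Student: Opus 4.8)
The plan is to prove all three parts by working with Meda's representation \eqref{Meda} of the kernel. Since $t(s)=\tfrac12\log\frac{1+s}{1-s}$ is an increasing bijection from $(0,1)$ onto $(0,\infty)$, for every fixed $x,y$ we have $\|W_t^H(x,y)\|_E=\sup_{t>0}|W_t^H(x,y)|=\sup_{0<s<1}|W_{t(s)}^H(x,y)|$, and likewise for the gradient and for $W_t^H1$. Thus each estimate reduces to a supremum over $s\in(0,1)$ of an explicit elementary expression, and the constants $C,c$ produced will be independent of $x,y$. Throughout I write $a=|x+y|^2$ and $b=|x-y|^2$.

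For (i), from \eqref{Meda} and $1-s^2\le1$ we get $|W_{t(s)}^H(x,y)|\le C s^{-n/2}e^{-\frac14(sa+b/s)}$. I would split $e^{-\frac14(sa+b/s)}=e^{-b/(8s)}\,e^{-\frac{sa}{4}-\frac{b}{8s}}$ and absorb the singular factor via $s^{-n/2}e^{-b/(8s)}\le C b^{-n/2}=C|x-y|^{-n}$, an instance of the inequality $r^\mu e^{-r}\le C_\mu e^{-r/2}$ recalled in the introduction. It then remains to prove $\inf_{0<s<1}\bigl(\frac{sa}{4}+\frac{b}{8s}\bigr)\ge c(b+|x||x-y|)$, which yields $\sup_{0<s<1}e^{-\frac{sa}{4}-\frac{b}{8s}}\le e^{-c(|x-y|^2+|x||x-y|)}$. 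Two elementary lower bounds are available: $s<1$ gives $\ge\frac{b}{8}$, and AM--GM gives $\ge c\sqrt{ab}=c|x+y||x-y|$. The main obstacle here is that this natural decay is in $|x+y|$, while the claim demands decay in $|x|$; I would bridge the gap with $|x+y|\ge2|x|-|x-y|$ and a short case split. If $|x-y|\ge|x|$ then $|x||x-y|\le|x-y|^2=b$ and the first bound suffices; if $|x-y|<|x|$ then $|x+y|\ge|x|$, so $|x+y||x-y|\ge|x||x-y|$ and the AM--GM bound suffices. Averaging the two lower bounds gives $\inf\ge c(|x-y|^2+|x||x-y|)$ in both cases, completing (i).

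For (ii), differentiating \eqref{Meda} gives $\nabla_xW_{t(s)}^H(x,y)=-\tfrac12\bigl(s(x+y)+\tfrac1s(x-y)\bigr)W_{t(s)}^H(x,y)$, so by (i) it suffices to bound $\sup_{0<s<1}s^{-n/2}\bigl(s|x+y|+\tfrac1s|x-y|\bigr)e^{-\frac14(sa+b/s)}$ by $C|x-y|^{-n-1}$. The term carrying $\tfrac1s|x-y|$ is handled directly, since $\frac{|x-y|}{s}e^{-b/(8s)}\le C|x-y|^{-1}$ combines with $s^{-n/2}e^{-b/(8s)}\le C|x-y|^{-n}$ to give $C|x-y|^{-n-1}$. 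The delicate term is $s|x+y|$: when $|x+y|\le|x-y|$ one simply notes $s|x+y|\le\tfrac1s|x-y|$ (as $s<1$), so it is dominated by the previous term; when $|x+y|>|x-y|$ one uses $s|x+y|e^{-sa/8}\le C|x+y|^{-1}\le C|x-y|^{-1}$ and pairs it with $s^{-n/2}e^{-b/(4s)}\le C|x-y|^{-n}$. Either way the bound $C|x-y|^{-n-1}$ follows.

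Finally, (iii) reduces to a Gaussian integral. Completing the square in $y$ in the exponent of \eqref{Meda} (the cross terms collapse because $(s-\tfrac1s)^2-(s+\tfrac1s)^2=-4$) one obtains the closed form
\[
W_{t(s)}^H1(x)=\Bigl(\frac{1-s^2}{1+s^2}\Bigr)^{n/2}e^{-\frac{s}{1+s^2}|x|^2}.
\]
Since $0<\frac{1-s^2}{1+s^2}<1$ for $s\in(0,1)$, this is bounded by $1$ uniformly in $x$ and $s$, giving $\|W_t^H1\|_E\le1$. For the gradient, $|\nabla_xW_{t(s)}^H1(x)|=\bigl(\frac{1-s^2}{1+s^2}\bigr)^{n/2}\frac{2s}{1+s^2}|x|e^{-\frac{s}{1+s^2}|x|^2}$; writing $\lambda=\frac{s}{1+s^2}\in(0,\tfrac12]$ and using $\lambda|x|e^{-\lambda|x|^2}=\sqrt\lambda\,(\sqrt\lambda|x|)e^{-(\sqrt\lambda|x|)^2}\le C\sqrt\lambda\le C$ shows $\|\nabla W_t^H1\|_E$ is bounded, completing (iii). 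The only genuinely non-routine point is the geometric argument in (i) converting $|x+y|$-decay into $|x|$-decay; the rest is bookkeeping of Gaussian factors.
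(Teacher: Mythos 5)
Your proof is correct and follows essentially the same route as the paper: pass to Meda's parametrization \eqref{Meda}, so that the $E$-norm becomes a supremum over $s\in(0,1)$, and then run elementary Gaussian estimates, including the closed form $W_{t(s)}^H1(x)=\big(\tfrac{1-s^2}{1+s^2}\big)^{n/2}e^{-\frac{s}{1+s^2}|x|^2}$ for part (iii). The only (cosmetic) divergence is in (i): the paper splits on the sign of $x\cdot y$ and first produces decay in $|y||x-y|$, while you split on $|x-y|$ versus $|x|$ and use AM--GM to get the $|x||x-y|$ decay directly; these are equivalent up to constants since $\big||x|-|y|\big|\leq|x-y|$.
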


\begin{proof}
(i) Observe that if $x,y \in \mathbb R^n$, $x\cdot y > 0$, then $|x+y|\geq|y|$ and for all $s \in(0,1)$,
\begin{align}\label{acosem1}
e^{-\frac{1}{4}\left[s|x+y|^2+\frac{1}{s}|x-y|^2\right]} &\leq e^{-\frac{1}{8s}|x-y|^2}e^{-\frac{1}{8}\left[s|x+y|^2+\frac{1}{s}|x-y|^2\right]} \nonumber\\
&\leq  e^{-\frac{1}{8s}|x-y|^2}e^{-\frac{1}{8}|x-y||x+y|} \leq e^{-\frac{1}{8}\left[\frac{1}{s}|x-y|^2+|y||x-y|\right]}.
\end{align}
On the other hand, if $x,y \in \mathbb R^n$, $x\cdot y \leq 0$, then $|x-y|\geq|y|$ and for all $s \in (0,1)$
\begin{equation}\label {acosem2}
e^{-\frac{1}{4}\left[s|x+y|^2+\frac{1}{s}|x-y|^2\right]} \leq e^{-\frac{1}{4s}|x-y|^2}\leq e^{-\frac{1}{8s}|x-y|^2}e^{-\frac{1}{8s}|y||x-y|} \leq e^{-\frac{1}{8}\left[\frac{1}{s}|x-y|^2+|y||x-y|\right]}.
\end{equation}
Therefore, (i) follows from \eqref{Meda}, \eqref{acosem1} and \eqref{acosem2}.

(ii) By (\ref{Meda}),
\begin{align*}
\left|\nabla_xW_{t(s)}^H(x,y)\right| &\leq \frac{C}{s^{n/2}} \left(\frac{|x-y|}{s}+s|x+y|\right) e^{-\frac{1}{4}\left[s|x+y|^2+\frac{1}{s}|x-y|^2\right]} \\
&\leq \frac{C}{s^{(n+1)/2}}~e^{-\frac{c}{s}|x-y|^2}\leq \frac{C}{|x-y|^{n+1}},\quad x,y\in\mathbb R^n,~x\neq y,~\hbox{and}~s\in(0,1).
\end{align*}

(iii) These properties can be easily deduced from the fact that
\begin{equation}\label{acosem4}
W_{t(s)}^H1(x) = \frac{1}{(4\sqrt{\pi})^{n/2}} \left(\frac{1-s^2}{1+s^2}\right)^{n/2}e^{-\frac{s}{1+s^2}|x|^2}, \;x \in \mathbb R^n \;\mbox{and}\; s\in (0,1).
\end{equation}
Indeed, we clearly have $|W_{t(s)}^H1(x)|\leq C$. Moreover,
$$|\nabla W_{t(s)}^H1(x)|\leq C\frac{s}{1+s^2}~|x|e^{-\frac{s}{1+s^2}|x|^2}\leq C\left(\frac{s}{1+s^2}\right)^{1/2}e^{-\frac{cs}{1+s^2}|x|^2}\leq C,$$
for all $0<s<1$ and $x\in\mathbb R^n$.
\end{proof}

In order to see that the maximal operator associated with the Poisson semigroup $P_*^{H}f=\sup_{t>0}|P_t^Hf|=\|P_t^Hf\|_E$ is bounded from $BMO_H(\mathbb R^n)$ into itself we can proceed using the vector-valued setting and the boundedness for the maximal heat semigroup as follows. Let $f\in BMO_H(\mathbb R^n)$. For any ball $B$ we have that
\begin{align*}
\lefteqn{\frac{1}{|B|}\int_B\left\|P_t^{ H}f(x)- \left(P_t^{ H}f\right)_{B}\right\|_{E}~ dx } \\
&= \frac{1}{|B|}\int_B\left\| \frac{1}{\Gamma(1/2)}\int_0^\infty W_{\frac{t^2}{4u}}^Hf(x)e^{-u} \frac{du}{u^{1/2}} - \frac{1}{|B|}\int_B \frac{1}{\Gamma(1/2)}\int_0^\infty W_{\frac{t^2}{4u}}^Hf(y) e^{-u}\frac{du}{u^{1/2}}~dy \right\|_{E}dx \\
&= \frac{1}{|B|}\int_B\left\| \frac{1}{\Gamma(1/2)}\int_0^\infty W_{\frac{t^2}{4u}}^Hf(x)e^{-u} \frac{du}{u^{1/2}}- \frac{1}{\Gamma(1/2)}\int_0^\infty \frac{1}{|B|}\int_B W_{\frac{t^2}{4u}}^Hf(y) ~dy ~e^{-u} \frac{du}{u^{1/2}}\right\|_{E}dx \\
&\leq  C\int_0^\infty \frac{1}{|B|} \int_B \left\|W^H_{\frac{t^2}{4u}}f(x) - \frac{1}{|B|}\int_B W_{\frac{t^2}{4u}}^Hf(y)dy \right\|_{E}dx~ e^{-u}\frac{du}{u^{1/2}} \\
&\leq  C \| W_\ast^Hf \|_{BMO_H(\mathbb R^n)}\int_0^\infty  e^{-u}\frac{du}{u^{1/2}}\leq  C \|f\|_{BMO_H(\mathbb R^n)}.
\end{align*}
If $B=B(x_0,r_0)$ for $x_0 \in \mathbb R^n$ and $r_0 \geq \gamma(x_0)$ then
\begin{align*}
\frac{1}{|B|}\int_B  \left\|P_t^{ H}f(x)\right\|_{E} dx & \leq  C \int_0^\infty \frac{1}{|B|}\int_B \left\| W_{\frac{t^2}{4u}}^Hf(x)\right\|_{E} dx~ e^{-u}\frac{du}{u^{1/2}}  \\
&\leq C \big\|W_\ast^Hf\big\|_{BMO_H(\mathbb R^n)}\int_0^\infty e^{-u}\frac{du}{u^{1/2}} \leq  C\|f\|_{BMO_H(\mathbb R^n)}.
\end{align*}
Therefore $P^H_\ast$ is bounded from $BMO_H(\mathbb{R}^n)$ into itself.

\subsection{Littlewood-Paley $g$-functions for the heat and Poisson semigroups associated with the Hermite operator}

If $\{T_t\}_{t>0}$ denotes  the heat or Poisson semigroup for the Hermite operator, the Littlewood-Paley $g$-function associated with $\{T_t\}_{t\geq 0}$ is defined by
$$g_{T_t}f(x) = \left(\int_0^\infty \left|t\frac{\partial}{\partial t}T_tf(x)\right|^2 \frac{dt}{t}\right)^{1/2}.$$
It is clear that
$$g_{T_t}f =\|t\frac{\partial}{\partial t}T_tf\|_F,\qquad\hbox{where}~F:=L^2\big((0,\infty),\frac{dt}{t}\big).$$
In \cite{StTo2} it was established that $g_{W_t^H}$ defines a bounded operator from $L^2(\mathbb R^n)$ into itself, or, in other words, the operator $U_{W_t^H}$ defined by $\displaystyle U_{W_t^H}f(x,t):= t\frac{\partial}{\partial t} W_t^Hf(x)$ is bounded from $L^2(\mathbb R^n)$ into $L^2(\mathbb R^n;F)$. We denote by
$$ K^H(x,y)= \left( t \frac{\partial}{\partial t}W_t^H(x,y)\right)_{t>0},\qquad x,y \in\mathbb R^n.$$
In order to show that $g_{W_t^H}$ is bounded from $BMO_H(\mathbb R^n)$ into itself it suffices to prove the following estimates and then apply Theorem \ref{criterioT1}.

\begin{Prop}\label{gfuncion}
There exist positive constants $C$ and $c$ such that
\begin{enumerate}
\item[(i)] $\displaystyle\|K^H(x,y)\|_{F} \leq \frac{C}{|x-y|^n}~e^{-c\left[|x-y|^2+|y||x-y|\right]}$, $x,y\in \mathbb R^n$, $x \neq y$;\vspace{3mm}
\item[(ii)] $\displaystyle\left\|\nabla_xK^H(x,y)\right\|_{F} \leq \frac{C}{|x-y|^{n+1}}$, $x,y \in \mathbb R^n$, $x \neq y$.\vspace{3mm}
\item[(iii)] Moreover, $\displaystyle \|U_{W_t^H}1\|_F  \in L^\infty(\mathbb R^n)$ and $\displaystyle\|\nabla U_{W_t^H}1\|_F \in  L^\infty(\mathbb R^n)$.
\end{enumerate}
\end{Prop}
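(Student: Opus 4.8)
The plan is to compute the $t$-derivative of the Meda kernel \eqref{Meda} explicitly and then bound its $F$-norm, $F=L^2((0,\infty),dt/t)$, by reducing all integrals over $t$ to integrals over the Meda parameter $s\in(0,1)$. Since $t=t(s)=\tfrac12\log\frac{1+s}{1-s}$, we have $\frac{dt}{t}=\frac{ds}{(1-s^2)\,t(s)}$ and $t\frac{\partial}{\partial t}=\frac{1-s^2}{2t(s)}\cdot s\,\frac{\partial}{\partial s}$ (this last identity because $\frac{ds}{dt}=1-s^2$ and it is convenient to carry the extra factor of $s$ so that the resulting weight pairs naturally with the Gaussian). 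Differentiating \eqref{Meda} in $s$ produces a prefactor that is a polynomial in $s$, $1/s$, $|x+y|^2$ and $|x-y|^2$, times the same Gaussian $e^{-\frac14[s|x+y|^2+\frac1s|x-y|^2]}$. The first reduction I would make is to absorb all positive powers of these quantities against the Gaussian using the stated inequality $r^\mu e^{-r}\le C_\mu e^{-r/2}$, so that after splitting off a fixed fraction of the exponent I am left with a clean bound of the form $\left|t\frac{\partial}{\partial t}W^H_{t(s)}(x,y)\right|\le \frac{C}{s^{n/2}\,t(s)}\,e^{-\frac{c}{s}|x-y|^2}$, uniformly, and analogously for the gradient with an extra $s^{-1/2}$.

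\textbf{For part (i)}, I then estimate the $F$-norm by
\[
\|K^H(x,y)\|_F^2=\int_0^\infty\left|t\frac{\partial}{\partial t}W_t^H(x,y)\right|^2\frac{dt}{t}
=\int_0^1\left|t(s)\frac{\partial}{\partial t}W_{t(s)}^H(x,y)\right|^2\frac{ds}{(1-s^2)\,t(s)}.
\]
Inserting the pointwise bound gives an integral dominated by $\int_0^1 \frac{C}{s^{n}\,t(s)}e^{-\frac{c}{s}|x-y|^2}\frac{ds}{(1-s^2)t(s)}$; near $s=0$ one has $t(s)\sim s$, so the integrand behaves like $s^{-n-2}e^{-c|x-y|^2/s}$, and the change of variable $r=|x-y|^2/s$ converts this into $|x-y|^{-2n-2}\int r^{n}e^{-cr}\,dr\cdot(\text{Jacobian})$, yielding the desired $|x-y|^{-2n}$ after accounting for the $dr$ measure; near $s=1$ the factor $1/(1-s^2)$ is integrable against $e^{-c|x-y|^2}$ because $t(s)\sim -\log(1-s)$ grows, and I would keep the Gaussian factor $e^{-c|x-y|^2}$ to supply the required $e^{-c|y||x-y|}$ through the same $x\cdot y$ case-split as in \eqref{acosem1}–\eqref{acosem2}. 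Part (ii) is identical with one extra power of $s^{-1/2}$ from the gradient, which only shifts the exponent in the endpoint analysis and still integrates to the claimed $|x-y|^{-n-1}$.

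\textbf{For part (iii)}, I would differentiate the explicit formula \eqref{acosem4} for $W^H_{t(s)}1$ in $s$ and bound $\|U_{W^H_t}1(x)\|_F$ and $\|\nabla U_{W^H_t}1(x)\|_F$ directly. The key point is that $W^H_{t(s)}1(x)=c_n\big(\frac{1-s^2}{1+s^2}\big)^{n/2}e^{-\frac{s}{1+s^2}|x|^2}$ is smooth up to $s=0$ and decays as $s\to1$, so $s\frac{\partial}{\partial s}$ of it is bounded by $C\big(1+\frac{s}{1+s^2}|x|^2\big)e^{-\frac{s}{1+s^2}|x|^2}$, which in turn is $\le C$ by $r^\mu e^{-r}\le C_\mu e^{-r/2}$; squaring and integrating $\frac{ds}{(1-s^2)t(s)}$ gives a finite bound independent of $x$ because the factor carries enough powers of $s$ near the origin to offset $1/t(s)\sim 1/s$, and the $(1-s^2)^{n/2}$ factor tames the endpoint $s=1$. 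The gradient introduces a factor $\frac{s}{1+s^2}|x|$, again controlled by the Gaussian uniformly in $x$.

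\textbf{The main obstacle} I anticipate is the behavior at the endpoint $s\to1$, where the time change is singular ($\frac{1}{1-s^2}$ blows up) and $t(s)\to\infty$; I must check that the Gaussian decay in $|x-y|$ (equivalently, the exponent $\frac1s|x-y|^2$, which does \emph{not} help near $s=1$) together with the slow logarithmic growth of $t(s)$ renders the $s$-integral convergent and produces the correct cross-term $e^{-c|y||x-y|}$ rather than spoiling it. A secondary technical care is that the $g$-function differentiation introduces the factor $t(s)$ itself into the integrand, so I should verify that the competing factors of $t(s)$ (one from $t\partial_t$, one from the measure $dt/t$) combine to give a genuinely integrable weight near both endpoints; tracking these powers carefully is where the routine computation could go wrong, so I would organize the whole estimate around the single substitution $s\mapsto r=|x-y|^2/s$ to keep the bookkeeping transparent.
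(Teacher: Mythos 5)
Your overall strategy --- differentiate the Meda kernel \eqref{Meda} in $s$, use the case-split of \eqref{acosem1}--\eqref{acosem2} to extract the factor $e^{-c[|x-y|^2+|y||x-y|]}$, split the resulting $s$-integral at $s=1/2$, and substitute $r=|x-y|^2/s$ near the origin --- is exactly the paper's. But there is a genuine error in your change-of-variables identity, and it sits precisely in the power bookkeeping you yourself flagged as the danger point. Since $\frac{ds}{dt}=1-s^2$, the correct identity is $t\frac{\partial}{\partial t}=t(s)(1-s^2)\frac{\partial}{\partial s}$, with $t(s)$ in the \emph{numerator}; your formula $t\frac{\partial}{\partial t}=\frac{1-s^2}{2t(s)}\,s\,\frac{\partial}{\partial s}$ puts it in the denominator. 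Near $s=0$ one has $t(s)\sim s$, so the correct operator behaves like $s\,\partial_s$ while yours behaves like $\tfrac12\partial_s$, an overestimate by a factor of order $1/s$. Combined with the measure, the correct computation is
\[
\|K^H(x,y)\|_F^2=\frac12\int_0^1\log\Big(\frac{1+s}{1-s}\Big)\Big|\frac{\partial}{\partial s}W^H_{t(s)}(x,y)\Big|^2(1-s^2)\,ds,
\]
whose integrand near $s=0$ is $\lesssim s\cdot s^{-n-2}e^{-c|x-y|^2/s}=s^{-n-1}e^{-c|x-y|^2/s}$ by \eqref{acosem3}, and this integrates to $C|x-y|^{-2n}$ as required. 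Your integrand $s^{-n-2}e^{-c|x-y|^2/s}$ integrates instead to $C|x-y|^{-2n-2}$: the phrase ``yielding the desired $|x-y|^{-2n}$ after accounting for the $dr$ measure'' is where the missing factor $|x-y|^{2}$ gets silently absorbed, because the Jacobian is already included in $|x-y|^{-2n-2}\int r^{n}e^{-cr}\,dr$. (Squaring your own pointwise bound $Cs^{-n/2}t(s)^{-1}e^{-c|x-y|^2/s}$ consistently would in fact give $|x-y|^{-2n-4}$.)

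The same error breaks part (iii): with your identity, near $s=0$ the quantity $|t\partial_t W^H_{t(s)}1(x)|$ is comparable to $|\partial_s W^H_{t(s)}1(x)|\sim|x|^2e^{-s|x|^2}$, and $\int_0^{1/2}|x|^4e^{-2s|x|^2}\frac{ds}{s}$ diverges logarithmically at $s=0$, so your claim that ``the factor carries enough powers of $s$ near the origin to offset $1/t(s)$'' fails. The extra factor of $s$ supplied by the correct identity is exactly what makes the paper's bound $\int_0^{1/2}(s^3+s|x|^4)e^{-s|x|^2}\,ds\le C$ work. Once the identity is corrected, your argument for (i) and (iii) coincides with the paper's proof; for (ii) the paper simply quotes \cite[Proposition~2.1]{StTo2}, and your proposal to reprove it directly is reasonable and amounts to the gradient estimate of Proposition \ref{maximal} with the additional logarithmic weight.
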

\begin{proof}
(i) Let us first note that, by using \eqref{Meda}, \eqref{acosem1} and \eqref{acosem2},
\begin{align}\label{acosem3}
\left|\frac{\partial W^H_{t(s)}}{\partial s} (x,y)\right| &\leq  C\left( \frac{1}{1-s} + \frac{1}{s}+ |x+y|^2+ \frac{|x-y|^2}{s^2} \right) \left(\frac{1-s}{s}\right)^{n/2} e^{-\frac{1}{4}\left[s|x+y|^2+\frac{1}{s}|x-y|^2\right]} \nonumber\\
&\leq  C \left(\frac{1}{1-s} + \frac{1}{s}\right)\left(\frac{1-s}{s}\right)^{n/2} e^{-\frac{1}{8}\left[s|x+y|^2+\frac{1}{s}|x-y|^2\right]} \nonumber \\
&\leq  C \left[(1-s)^{n/2-1} +s^{-(n/2+1)}\right]e^{-\frac{1}{8}\left[\frac{1}{s}|x-y|^2+|y||x-y|\right]},~x,y\in\mathbb R^n,~s\in(0,1).
\end{align}
Hence, applying Meda's change of parameters $t=t(s)=\frac{1}{2}\log\frac{1+s}{1-s}$ and \eqref{acosem3} we have
\begin{align*}
\|K^H(x,y)\|_F &=\left(\frac{1}{2}\int_0^1\log\left(\frac{1+s}{1-s}\right)\left|\frac{\partial W_{t(s)}^H}{\partial s}(x,y)\right|^2(1-s^2)~ds\right)^{1/2} \\
&\leq  C\left(\int_0^{1/2}\frac{e^{-\frac{|x-y|^2}{8s}}}{s^n}~\frac{ds}{s} - \int_{1/2}^1(1-s)^{n-1}\log(1-s)~ds\right)^{1/2} e^{-\frac{1}{16}\left[|x-y|^2+|y||x-y|\right]}\\
&\leq \frac{C}{|x-y|^n}~e^{-\frac{1}{16}\left[|x-y|^2+|y||x-y|\right]},\;\;x,y\in \mathbb R^n,\;x\neq y.
\end{align*}

(ii) This property was established in \cite[Proposition~2.1]{StTo2}.

(iii) By Meda's change of parameters and \eqref{acosem4},
\begin{align*}
\|U_{W_t^H}1(x)\|_F^2 &\leq C \int_0^1 \log\left(\frac{1+s}{1-s}\right)(1-s)^{n+1}\left(\frac{s}{1-s}+|x|^2\right)^2 e^{-\frac{2s}{1+s^2}|x|^2}~ds \\
&\leq C\int_0^{1/2}\left(s^3+s|x|^4\right)e^{-s|x|^2}~ds - \int_{1/2}^1(1-s)^{n-1}\log(1-s)~ds  \leq C,
\end{align*}
for all $x\in\mathbb R^n$. According to \eqref{acosem4},
$$
\nabla W^H_{t(s)}1(x)= -\frac{1}{(4\pi)^{n/2}}\left(\frac{1-s^2}{1+s^2}\right)^{n/2} \frac{2s x }{1+s^2}~e^{-\frac{s}{1+s^2}|x|^2},\; x\in \mathbb R^n,\;s \in (0,1).
$$
Then
\begin{align*}
\lefteqn{\|\nabla U_{W^H_{t}}1\|_F^2} \\ &\leq C\int_0^1\log\left(\frac{1+s}{1-s}\right)(1-s) \left((1-s)^{n/2-1}s^{3/2}+(1-s)^{n/2+1}|x|\right)^2
 e^{-\frac{s}{c}|x|^2}ds \\ &\leq C,\qquad x \in \mathbb R^n.
\end{align*}
\end{proof}

The boundedness in $BMO_H(\mathbb R^n)$ of $g_{P^{H}_t}$ can be deduced from the $BMO_H(\mathbb R^n)$-boundedness of $g_{W_t^H}$ as in the previous subsection.

\subsection{Hermite-Riesz transforms}\label{Subsection:Riesz}
For every $i=1,2,\ldots,n$, the $i$-th Hermite-Riesz transform $R_i^H$ is defined by
$$R_i^Hf= \frac{\partial}{\partial x_i}H^{-1/2}f,\qquad f\in C_c^\infty(\mathbb R^n).$$
Here $C_c^\infty(\mathbb R^n)$ denotes the space of the $C^\infty$-functions on $\mathbb R^n$ with compact support. The negative square root of the Hermite operator is given by
$$H^{-1/2}f(x) = \frac{1}{\Gamma(1/2)}\int_0^\infty W_t^Hf(x) ~\frac{dt}{t^{1/2}}.$$
The operators $R_i^H$ are bounded from $L^2(\mathbb R^n)$ into itself and, for every $f\in L^2(\mathbb R^n)$,
$$R_i^Hf(x) = \lim_{\varepsilon \rightarrow 0^+}\int_{|x-y| >\varepsilon} R_i^H(x,y)f(y)~ dy, \;\;\mbox{a.e.}\;x\in \mathbb R^n,$$
where
$$R_i^H(x,y) = \frac{1}{\Gamma(1/2)}\int_0^\infty\frac{\partial}{\partial x_i}W_t^H(x,y)~\frac{dt}{t^{1/2}},\;\; x,y\in\mathbb R^n,\;x\neq y,$$
see \cite{Th2} and \cite{StTo}. By proceeding as in the proof of \cite[Lemma~5.6]{StinTo} it can be checked that
\begin{equation}\label{acoriesz1}
\big|R_i^H(x,y)\big| \leq C \frac{e^{-c\left[|x-y|^2+|y||x-y|\right]}}{|x-y|^n},\;x,y\in \mathbb R^n,\;x\neq y,
\end{equation}
and (see also \cite{StTo})
\begin{equation}\label{acoriesz2}
\sum_{j=1}^n\left(\left|\frac{\partial}{\partial x_j}R_i^H(x,y)\right| + \left|\frac{\partial}{\partial y_j}R_i^H(x,y)\right| \right) \leq C\frac{e^{-c\left[|x-y|^2+|y||x-y|\right]}}{|x-y|^{n+1}},\;x,y\in \mathbb R^n,\;x\neq y.
\end{equation}
Hence, the Hermite-Riesz transforms are Hermite-Calder\'on-Zygmund operators. Then, the boundedness in $BMO_H(\mathbb R^n)$ of $R_i^H$ can be deduced from Remark \ref{T1 y multiplicador} and the following

\begin{Prop} \label{ProRiesz}
Let $i=1,\ldots,n$. Then $g:=R_i^H1$ defines a bounded pointwise multiplier in $BMO_H(\mathbb R^n)$.
\end{Prop}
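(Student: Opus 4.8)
The plan is to verify the two conditions of Proposition \ref{multiplicador} for $g=R_i^H1$. By that proposition it suffices to show that (i) $g\in L^\infty(\mathbb R^n)$ and (ii) the logarithmic mean-oscillation bound holds; and by the observation recorded in Remark \ref{como aplicar} (the same remark appears right after Proposition \ref{multiplicador}), condition (ii) is a consequence of $\nabla g\in L^\infty(\mathbb R^n)$, since this makes $g$ Lipschitz. Thus the whole statement reduces to proving the two uniform bounds $\|g\|_{L^\infty(\mathbb R^n)}\le C$ and $\|\nabla g\|_{L^\infty(\mathbb R^n)}\le C$.

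To obtain these I would start from $g(x)=R_i^H1(x)=\frac{1}{\Gamma(1/2)}\int_0^\infty \frac{\partial}{\partial x_i}W_t^H1(x)\,\frac{dt}{t^{1/2}}$, apply Meda's change of parameters $t=t(s)=\frac12\log\frac{1+s}{1-s}$ (so that $dt=\frac{ds}{1-s^2}$), and insert the explicit formula \eqref{acosem4}. This yields $\frac{\partial}{\partial x_i}W_{t(s)}^H1(x)=-C\big(\frac{1-s^2}{1+s^2}\big)^{n/2}\frac{2sx_i}{1+s^2}e^{-\frac{s}{1+s^2}|x|^2}$. For the gradient I would differentiate once more under the integral sign, which is legitimate because the integrand is smooth in $x$ with locally uniform bounds; this produces second derivatives of $W_{t(s)}^H1$ which, after using the elementary inequality $re^{-r}\le C e^{-r/2}$ already invoked in the paper, satisfy $|\partial_{x_j}\partial_{x_i}W_{t(s)}^H1(x)|\le C(1-s)^{n/2}\frac{s}{1+s^2}e^{-\frac{cs}{1+s^2}|x|^2}$. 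In this way both $\|g\|_\infty$ and $\|\nabla g\|_\infty$ are dominated by an $s$-integral of essentially the same shape over $(0,1)$.

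The remaining work is to bound these integrals uniformly in $x$, splitting $(0,1)=(0,\tfrac12)\cup(\tfrac12,1)$. On $(0,\tfrac12)$ the singular weight $t^{-1/2}\sim s^{-1/2}$ is compensated by the factor $s$ coming from the derivative, leaving an integrand of the form $s^{1/2}e^{-cs|x|^2}$; the $x$-uniformity follows from $\int_0^{1/2}s^{1/2}e^{-cs|x|^2}\,ds\le C$, together with the gain of a power $|x|^{-2}$ for large $|x|$ in the case of $g$ itself, obtained by the rescaling $u=s|x|^2$, which absorbs the extra factor $|x_i|$. On $(\tfrac12,1)$ I would use $\log\frac{1+s}{1-s}\sim-\log(1-s)$ and $\frac{1}{1-s^2}\sim\frac{1}{2(1-s)}$, reducing matters to $\int_{1/2}^1(1-s)^{n/2-1}(-\log(1-s))^{-1/2}\,ds<\infty$, while the Gaussian survives near $s=1$ as $e^{-c|x|^2}$ and is thus bounded in $x$. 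The step I expect to be most delicate is the convergence of this last integral in the lowest dimension $n=1$, where $(1-s)^{n/2-1}=(1-s)^{-1/2}$ is borderline but still integrable thanks to the logarithmic weight; this is precisely where the $t^{-1/2}$ weight in the subordination-type formula (and Meda's parametrization) is essential, and where the uniformity in $x$ hinges on the factor $e^{-\frac{s}{1+s^2}|x|^2}$ producing genuine Gaussian decay as $s\to1^-$.
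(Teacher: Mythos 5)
Your strategy is sound and is genuinely different from the paper's. The paper disposes of condition (i) of Proposition \ref{multiplicador} by citing \cite[Lemma 5.10]{StinTo}, and for condition (ii) it does \emph{not} use a Lipschitz bound: it runs the comparison-kernel argument of the last subsection (subtracting the local convolution-type kernel $R^{(k)}$ and estimating the term $Z_1$ by integrating $\partial_u(R^H-R^{(k)})(u,z)$), which yields $|R^H1(x)-R^H1(y)|\le C|x-y|/\gamma(x_k)$ and hence (ii) via \cite[Lemma~2.2]{DGMTZ}. Your route --- computing $\partial_{x_i}H^{-1/2}1$ and $\nabla\partial_{x_i}H^{-1/2}1$ explicitly from \eqref{acosem4} under Meda's parametrization --- is more elementary and self-contained, and your estimates are correct: near $s=0$ the factor $s$ from the derivative beats $t^{-1/2}\sim s^{-1/2}$, near $s=1$ the integrand is $(1-s)^{n/2-1}(-\log(1-s))^{-1/2}$ (integrable even for $n=1$), the second derivative obeys $|\partial_{x_j}\partial_{x_i}W^H_{t(s)}1(x)|\le C(1-s)^{n/2}\tfrac{s}{1+s^2}e^{-c\frac{s}{1+s^2}|x|^2}$ after absorbing $\sigma|x|^2e^{-\sigma|x|^2}\le Ce^{-\sigma|x|^2/2}$, and a Lipschitz bound does imply (ii) by the Remark following Proposition \ref{multiplicador}. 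If valid, this actually gives a cleaner bound ($|g(x)-g(y)|\le C|x-y|$, with no $\gamma$ in the denominator) than the paper's.

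The one step you cannot pass over is the very first identity, $R_i^H1(x)=\frac{1}{\Gamma(1/2)}\int_0^\infty \partial_{x_i}W_t^H1(x)\,\frac{dt}{t^{1/2}}$. In the framework of Theorem \ref{criterioT1} the object $T1$ is $\mathbb{T}1$, which for the Riesz transform is the principal value $\lim_{\varepsilon\to0^+}\int_{|x-y|>\varepsilon}R_i^H(x,y)\,dy$, and identifying this with the absolutely convergent $t$-integral of $\partial_{x_i}W_t^H1$ is not a routine interchange: the double integral $\int_0^\infty\int_{\mathbb R^n}|\partial_{x_i}W_t^H(x,y)|\,dy\,\frac{dt}{t^{1/2}}$ \emph{diverges}, since $\int_{\mathbb R^n}|\nabla_xW^H_{t(s)}(x,y)|\,dy\sim s^{-1/2}$ and $\frac{dt}{t^{1/2}}\sim\frac{ds}{s^{1/2}}$ near $s=0$, producing $\int_0\frac{ds}{s}$. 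To justify the identity you must apply Fubini at fixed $\varepsilon>0$ and then pass to the limit using the cancellation of the near-odd part of $\partial_{x_i}W^H_{t(s)}(x,\cdot)$ on $\{|x-y|\le\varepsilon\}$, which gives the uniform domination $\big|\int_{|x-y|>\varepsilon}\partial_{x_i}W^H_{t(s)}(x,y)\,dy\big|\le Cs^{1/2}$ needed for dominated convergence. This is exactly the kind of verification carried out in \cite{StinTo}; without it your computation controls $\partial_{x_i}H^{-1/2}1$ but not, a priori, the function $T1$ to which the criterion is applied. Once that identification is in place, the rest of your argument goes through.
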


\begin{proof}
We have to check conditions (i) and (ii) of Proposition \ref{multiplicador}. Since
$$R_i^H1 (x) = \lim_{\varepsilon \rightarrow 0^+}\int_{|x-y|>\varepsilon}R_i^H(x,y)~dy, \;\; \mbox{a.e.}\;x \in \mathbb R^n,$$
according to \cite[Lemma 5.10]{StinTo}, (i) holds. Assertion (ii) in Proposition \ref{multiplicador} can be proved by using the procedure developed in the proof of the corresponding property for the variation operator given in the next section. Since that proof is more involved than this one we prefer to put the complete description in the last subsection.
\end{proof}

The proof of Theorem \ref{operadores} is complete.

\vskip0.2cm

The last two subsections of this paper are devoted to prove Theorem \ref{variacion}.

\subsection{$\rho$-variation for the heat and Poisson semigroups associated with the Hermite operator}\label{Subsection:variacion semigrupos}

Here we prove Theorem \ref{variacion} for the semigroups $\{W_t^H\}_{t>0}$ and $\{P^{H}_t\}_{t>0}$.

Let us first analyze the operator $\mathcal{V}_\rho(W_t^H)$, $\rho>2$. If, as above, $t=t(s)=\frac{1}{2}\log\frac{1+s}{1-s}$, $s\in (0,1)$, then
$$\sup_{t_j\searrow 0}\left(\sum_{j=1}^\infty|W^H_{t_j}f(x)-W^H_{t_{j+1}}f(x)|^\rho\right)^{1/\rho} = \sup_{\substack{s_j \searrow 0\\ 0<s_j<1}}\left
(\sum_{j=1}^\infty|W^H_{t(s_j)}f(x)-W^H_{t(s_{j+1})}f(x)|^\rho\right)^{1/\rho}.$$
Therefore, when dealing with $\mathcal{V}_\rho(W_t^H)$ the expression for the kernel $W_{t(s)}^H(x,y)$ given by (\ref{Meda}) can be used. In order to prove our result we apply Theorem \ref{criterioT1} in a vector-valued setting, see Remark \ref{util}. Consider the Banach space $E_\rho$ defined as follows. A complex function $h$ defined in $[0,\infty)$ is in $E_\rho$, $\rho>2$, when
$$\|h\|_{E_\rho} := \sup_{t_j \searrow 0}\left(\sum_{j=1}^\infty |h(t_j)-h(t_{j+1})|^\rho\right)^{1/\rho} < \infty.$$
Clearly,
$$\mathcal{V}_\rho(W_t^H)(f)(x) = \|W_t^Hf(x)\|_{E_\rho}, \;\; x\in \mathbb R^n.$$
It is known that $\mathcal{V}_\rho(W_t^H)$ maps $L^2(\mathbb R^n)$ into itself, see \cite{CMMTV}. In order to prove that $\mathcal{V}_\rho(W_t^H)$ is bounded from $BMO_H(\mathbb R^n)$ into itself it suffices to see that the operator $\mathbb V_\rho$ defined by
$$\mathbb V_\rho(f) = (W^H_tf)_{t>0},\qquad f \in BMO_H(\mathbb R^n),$$
is bounded from $BMO_H(\mathbb R^n)$ into $BMO_H(\mathbb R^n; E_\rho)$. To this end, according to Theorem \ref{criterioT1}, we only have to check that the kernel $(W_t^H(x,y))_{t>0}$ satisfies the properties stated in the following

\begin{Prop}\label{semigrupoHCZ}
Let $\rho >2$. There exist positive constants $C$ and $c$ such that
\begin{enumerate}
\item[(i)] $\displaystyle\|W_t^H(x,y)\|_{E_\rho} \leq \frac{C}{|x-y|^n}~e^{-c\left[|y||x-y|+|x-y|^2\right]}$, $x,y\in \mathbb R^n$, $x \neq y$;\vspace{3mm}
\item[(ii)] $\displaystyle\left\|\nabla_xW_t^H(x,y)\right\|_{E_\rho} \leq \frac{C}{|x-y|^{n+1}}$, $x,y \in \mathbb R^n$, $x \neq y$.\vspace{3mm}
\item[(iii)] Moreover, $\displaystyle\mathbb V_\rho(1) \in L^\infty(\mathbb R^n;E_\rho)$ and $\displaystyle\nabla\mathbb V_\rho(1) \in  L^\infty(\mathbb R^n;E_\rho)$.
\end{enumerate}
\end{Prop}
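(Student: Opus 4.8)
The plan is to reduce all three parts to an $L^1$-in-$t$ (total variation) estimate, so that the exponent $\rho$ disappears entirely. The starting point is the elementary fact that for $\rho\ge1$ the $\rho$-variation of a function is dominated by its total variation: if $h$ is absolutely continuous on $(0,\infty)$ then for every sequence $t_j\searrow0$ one has $(\sum_j|h(t_j)-h(t_{j+1})|^\rho)^{1/\rho}\le\sum_j|h(t_j)-h(t_{j+1})|\le\int_0^\infty|h'(t)|\,dt$, whence $\|h\|_{E_\rho}\le\int_0^\infty|h'(t)|\,dt$. Applying this with $h(t)=W_t^H(x,y)$ and with $h(t)=\nabla_xW_t^H(x,y)$, and then Meda's change of parameters $t=t(s)=\frac12\log\frac{1+s}{1-s}$, whose Jacobian $\frac{dt}{ds}=\frac{1}{1-s^2}$ exactly cancels the chain-rule factor $\frac{ds}{dt}=1-s^2$, yields
$$\|W_t^H(x,y)\|_{E_\rho}\le\int_0^1\left|\frac{\partial W_{t(s)}^H}{\partial s}(x,y)\right|ds,\qquad \|\nabla_xW_t^H(x,y)\|_{E_\rho}\le\int_0^1\left|\frac{\partial}{\partial s}\nabla_xW_{t(s)}^H(x,y)\right|ds.$$
Thus everything is reduced to integrating explicit $s$-derivatives of the Mehler kernel \eqref{Meda} in $s$.

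For (i) I would insert the bound \eqref{acosem3} already established in the proof of Proposition \ref{gfuncion}. Pulling out the full factor $e^{-\frac18|y||x-y|}$ and, using $\tfrac1s\ge1$, a factor $e^{-\frac1{16}|x-y|^2}$, it remains to bound $\int_0^{1/2}s^{-(n/2+1)}e^{-\frac{c}{s}|x-y|^2}\,ds+\int_{1/2}^1(1-s)^{n/2-1}\,ds$. The second integral is finite since $n\ge1$, and the first is $\sim|x-y|^{-n}$ by the substitution $u=|x-y|^2/s$ (equivalently via $\int_0^\infty s^{-\beta}e^{-a/s}\,ds=a^{1-\beta}\Gamma(\beta-1)$ with $\beta=\tfrac{n}{2}+1$). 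This gives exactly the asserted two-factor exponential size estimate.

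For (ii), which I expect to be the main obstacle, I would differentiate $\nabla_xW_{t(s)}^H(x,y)$ once more in $s$. Writing $\nabla_xW_{t(s)}^H$ as a product of the prefactor $(\frac{1-s^2}{4\pi s})^{n/2}$, the polynomial coefficient $-\frac s2(x+y)-\frac1{2s}(x-y)$, and the Gaussian $e^{-\frac14[s|x+y|^2+\frac1s|x-y|^2]}$, the $s$-derivative produces finitely many terms, each a product of powers of $s$, $|x+y|$ and $|x-y|$ times the Gaussian. Every occurrence of $s|x+y|^2$ and $\frac1s|x-y|^2$ is absorbed into half of the exponent via $r^\mu e^{-r}\le C_\mu e^{-r/2}$ (so that $|x+y|$ disappears and the surviving Gaussian is $e^{-\frac cs|x-y|^2}$), and the remaining factors combine so that each term is bounded near $s=0$ by $C\,s^{-(n/2+2)}|x-y|\,e^{-\frac cs|x-y|^2}$ and near $s=1$ by an integrable power of $(1-s)$. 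Integrating as in (i) — now with $\beta=\tfrac{n}{2}+2$ — gives $|x-y|\cdot|x-y|^{-(n+2)}=|x-y|^{-(n+1)}$, as claimed. The delicate point is purely the bookkeeping of the mixed $s$- and $x$-differentiation and checking that the worst power of $1/s$ is precisely the one producing $|x-y|^{-(n+1)}$.

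Finally, for (iii) I would use the explicit expression \eqref{acosem4} for $W_{t(s)}^H1(x)$ and its gradient. By the same total-variation reduction it suffices to bound $\int_0^1|\partial_sW_{t(s)}^H1(x)|\,ds$ and $\int_0^1|\partial_s\nabla W_{t(s)}^H1(x)|\,ds$ uniformly in $x$; differentiating \eqref{acosem4} in $s$ and absorbing the factors involving $s|x|^2$ into the exponential $e^{-\frac{s}{1+s^2}|x|^2}$ (exactly as in Proposition \ref{gfuncion}(iii), but with the $L^1$ norm in place of the $F$-norm) leaves integrands that are bounded near $s=0$ and have an integrable singularity of type $(1-s)^{n/2-1}$ near $s=1$, so both integrals are finite and independent of $x$. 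This shows $\mathbb V_\rho(1),\nabla\mathbb V_\rho(1)\in L^\infty(\mathbb R^n;E_\rho)$ and completes the proof.
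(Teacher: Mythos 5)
Your proposal is correct and, for parts (i) and (iii), follows essentially the same route as the paper: dominate the $E_\rho$ norm by the total variation $\int_0^1|\partial_s(\cdot)|\,ds$ after Meda's change of parameters, then feed in \eqref{acosem3} and \eqref{acosem4} and integrate. The only real divergence is in (ii): the paper does not prove this estimate at all but simply cites \cite[p.~90]{CMMTV}, whereas you sketch a direct computation of $\int_0^1|\partial_s\nabla_xW^H_{t(s)}(x,y)|\,ds$. Your bookkeeping there is right — after absorbing $\tfrac{1}{s^2}|x-y|^2$ and the $|x+y|$ factors into the Gaussian, the worst contribution near $s=0$ is indeed of order $s^{-(n/2+2)}|x-y|e^{-c|x-y|^2/s}$, which integrates to $C|x-y|^{-(n+1)}$, and the $(1-s)^{n/2-1}$ singularity at $s=1$ is integrable for $n\geq1$ — so the self-contained argument is a legitimate (and arguably preferable) substitute for the citation, at the cost of some unilluminating differentiation.
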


\begin{proof}
(i) Let $\{s_j\}_{j=1}^\infty \subset (0,1)$ be a decreasing sequence such that $\lim_{j \rightarrow \infty}s_j=0$. By \eqref{acosem3} we have
\begin{align*}
\lefteqn{\left(\sum_{j=1}^\infty|W^H_{t(s_j)}(x,y)-W^H_{t(s_{j+1})}(x,y)|^\rho\right)^{1/\rho}}\\
 &\hspace{5mm}\leq  \sum_{j=1}^\infty|W^H_{t(s_j)}(x,y)-W^H_{t(s_{j+1})}(x,y)| \leq \int_0^1 \left|\frac{\partial}{\partial s}W^H_{t(s)}(x,y)\right| ds \\
&\hspace{5mm}\leq  C e^{-\frac{1}{16}\left[|x-y|^2+|y||x-y|\right]}\int_0^1 \left((1-s)^{n/2-1} +s^{-(n/2+1)}\right) e^{-\frac{1}{16s}|x-y|^2}ds \\
&\hspace{5mm}\leq C e^{-\frac{1}{16}\left[|x-y|^2+|y||x-y|\right]}\left(\int_0^{1/2} \frac{1}{s^{n/2}} ~ e^{-\frac{1}{16s}|x-y|^2}~\frac{ds}{s} + \int_{1/2}^1 (1-s)^{n/2-1}ds \right) \\
&\hspace{5mm}\leq C e^{-\frac{1}{16}\left[|x-y|^2+|y||x-y|\right]}\left(|x-y|^{-n} \int_0^\infty u^{n/2}e^{-u}\frac{du}{u} +1\right) \\
&\hspace{5mm}\leq  \frac{C}{|x-y|^n}~ e^{-\frac{1}{32}\left[|x-y|^2+|y||x-y|\right]},\;\; x,y \in \mathbb R^n, \;x \neq y.
\end{align*}

(ii) This estimate was proved in \cite[p.~90]{CMMTV}.

(iii) By \eqref{acosem4} we have
\begin{align*}
\mathcal{V}_\rho(W_t^H)(1)(x) & \leq  \int_0^1 \left|\frac{\partial}{\partial s} W_{t(s)}^H1(x)\right|ds \\
& \leq  C \int_0^1 \left[(1-s)^{-1}+ |x|^2(1-s)\right](1-s)^{n/2}e^{-\frac{s}{1+s^2}|x|^2}ds \\
&\leq  C \left( \int_0^{1/2} (1+|x|^2)e^{-\frac{s}{2}|x|^2}ds + \int_{1/2}^1 (1-s)^{n/2-1}~ds \right) = C,\;\; x \in \mathbb R^n.
\end{align*}
This proves that $\mathbb V_\rho(1) \in L^\infty(\mathbb R^n;E_\rho)$.
By using again \eqref{acosem4} we get
\begin{align*}
\lefteqn{\sum_{j=1}^\infty \left| \nabla\left(W^H_{t(s_j)}1(x)-W^H_{t(s_{j+1})}1(x)\right)\right|}\\
&\hspace{5mm}\leq C|x| \sum_{j=1}^\infty \left|\frac{s_j}{1+s_j^2}\left(\frac{1-s_j^2}{1+s_j^2}\right)^{n/2}e^{-\frac{s_j}{1+s_j^2}|x|^2} - \frac{s_{j+1}}{1+s^2_{j+1}}\left(\frac{1-s_{j+1}^2}{1+s_{j+1}^2}\right)^{n/2} e^{-\frac{s_{j+1}}{1+s_{j+1}^2}|x|^2}\right| \\
&\hspace{5mm}\leq  C|x| \int_0^1 \left|\frac{\partial}{\partial s}\left[\frac{s}{1+s^2}~W_{t(s)}^H1(x)\right]\right|ds.
\end{align*}
Since
\begin{align*}
\left|\frac{\partial}{\partial s}\left[\frac{s}{1+s^2}~W_{t(s)}^H1(x)\right]\right| & \leq  C\left((1-s)W_{t(s)}^H1(x) + s\frac{\partial}{\partial s}W_{t(s)}^H1(x)\right) \\
&\leq  C \left((1-s)+ \frac{s}{1-s} + s(1-s)|x|^2 \right)(1-s)^{n/2} e^{-\frac{s}{1+s^2}|x|^2} \\
&\leq  C\left(\frac{1}{1-s}+s|x|^2\right)(1-s)^{n/2} e^{-\frac{s}{1+s^2}|x|^2},\;x \in \mathbb R^n\; \mbox{and}\; s\in (0,1),
\end{align*}
it follows that
\begin{align*}
\int_0^1  \left|\frac{\partial}{\partial s}\left[\frac{s}{1+s^2}~W_{t(s)}^H1(x)\right]\right|~ ds
& \leq  C\int_0^{1/2} (1+s|x|^2)~e^{-\frac{s}{c}|x|^2}~ds
+ (1+|x|)~e^{-c|x|^2} \\
&\leq  C \left(\chi_{\{|x| < 1\}}(x) + \frac{1}{|x|^2}\chi_{\{|x| \geq 1\}}(x)\right), \; x \in \mathbb R^n.
\end{align*}
Hence, $ \nabla\mathbb V_\rho(1) \in L^\infty(\mathbb R^n;E_\rho)$.
\end{proof}

To verify that the variation operator associated with the Poisson semigroup $\mathcal{V}_\rho(P_t^{H})$ is bounded from $BMO_H(\mathbb R^n)$ into itself we can proceed as in the final part of Subsection \ref{Subsection:Maximales} by replacing the space $E$ by $E_\rho$. Details are left to the reader.

\subsection{$\rho$-variation of Hermite-Riesz transforms}

In order to simplify the notation and computations we establish the $BMO_H$-boundedness of the $\rho$-variation operator of the Hermite-Riesz transforms in dimension one. The result in higher dimensions can be proved in a similar fashion. The rather cumbersome computations are left to the interested reader.

As it was mentioned in Subsection \ref{Subsection:Riesz}, the Hermite-Riesz transform $R^H$ is a Hermite-Calder\'on-Zygmund operator. For every $\varepsilon >0$ we set
$$R_\varepsilon^Hf(x)= \int_{|x-y|>\varepsilon} R^H(x,y)f(y)~dy, \;\; x\in \mathbb R.$$
To describe the vector-valued setting, consider the Banach space $E_\rho$ given in Subsection \ref{Subsection:variacion semigrupos}. We have
\begin{align*}
\mathcal{V}_\rho(R^H_\varepsilon)f(x)&:= \sup_{\varepsilon_j\searrow 0}\left(\sum_{j=1}^\infty \Big|R^H_{\varepsilon_j}f(x)-R^H_{\varepsilon_{j+1}}f(x)\Big|^\rho\right)^{1/\rho}\\
  &= \Big\|\Big(\int_{|x-y|>\varepsilon}R^H(x,y)f(y)~dy\Big)_{\varepsilon>0}\Big\|_{E_\rho}.
\end{align*}
Now define the operator $U$ by
$$Uf(x)=\Big(\int_{|x-y|>\varepsilon}R^H(x,y)f(y)~dy\Big)_{\varepsilon>0},\,\,\,x\in \mathbb{R}.$$
To prove that $\mathcal{V}_\rho(R^H_\varepsilon)$ is bounded from $BMO_H(\mathbb{R})$ into itself it is enough to show that the operator $U$ given above is bounded from $BMO_H(\mathbb{R})$ into $BMO_H(\mathbb{R}; E_\rho)$. For that we will apply Theorem \ref{criterioT1} in this vector-valued setting. The first thing to check is that $U$ is a (vector-valued) Hermite-Calder\'on-Zygmund operator, see Definition \ref{definicion}. By one hand, the size condition (1) in Definition \ref{definicion} is valid (see \eqref{acoriesz1}). However, there is a problem with  the smoothness condition (2). The problem is due to the fact that the kernel of the operator $U$, namely $\left\{\chi_{|x-y| >\varepsilon}R^H(x,y)\right\}_{\varepsilon>0}$ in $E_\rho$, cannot be differentiated with respect to $x$. If we follow the proof of the ``if'' part of Theorem \ref{criterioT1}, we can see that the smoothness condition (2) in Definition \ref{definicion} for the kernel $K$ is only applied to prove \eqref{a sustituir}. Hence, we must prove estimate \eqref{a sustituir} for $K=(\hbox{kernel of the operator}~U)$ in an alternative way. This is done in Lemma \ref{Lem:geometrico} below. To overcome the difficulty of estimating a non-smooth kernel, the proof of Lemma \ref{Lem:geometrico} uses a geometric argument introduced for the first time in \cite{GiTo}. Finally, to conclude that $U$ is bounded from $BMO_H(\mathbb{R})$ into $BMO_H(\mathbb{R}; E_\rho)$ we check hypothesis (i) and (ii) of Theorem \ref{criterioT1} on $U1$. Again, the difficulty arises when we want to verify (ii) and the geometric argument of \cite{GiTo} will be needed.

\subsubsection{Alternative proof of \eqref{a sustituir}.} It is clear that we have to begin by proving the following

\begin{Lem}\label{Lem:geometrico}
Let $f\in BMO_H(\mathbb{R})$ and $B=B(x_0,r_0)$, for some $r_0>0$. Set
$$U(f_2)(x):= \int_{(B^*)^c}\chi_{|x-y| >\varepsilon}(y)R^H(x,y)\left(f(y)-f_B\right)~dy,\;\; x \in B.$$
Then $\|U(f_2)(x)-U(f_2)(z)\|_{E_\rho}\leq C\|f\|_{BMO_H(\mathbb R^n)}$, for all $x,z\in B$.
\end{Lem}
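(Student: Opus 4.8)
The plan is to fix $x,z\in B$ and, for each truncation level $\varepsilon>0$, split the integrand of $U(f_2)(x)-U(f_2)(z)$ into a piece that carries the regularity of the kernel and a piece that carries the jump of the truncation. Writing $h(y):=R^H(z,y)\big(f(y)-f_B\big)\chi_{(B^\ast)^c}(y)$ and noting that for $y\in(B^\ast)^c$ and $x,z\in B$ one has $|x-y|,|z-y|>r_0$ (so the kernel is non-singular on the whole domain of integration), I would use the decomposition
$$
\chi_{|x-y|>\varepsilon}R^H(x,y)-\chi_{|z-y|>\varepsilon}R^H(z,y)
=\chi_{|x-y|>\varepsilon}\big(R^H(x,y)-R^H(z,y)\big)+\big(\chi_{|x-y|>\varepsilon}-\chi_{|z-y|>\varepsilon}\big)R^H(z,y),
$$
which yields, as elements of $E_\rho$, $U(f_2)(x)-U(f_2)(z)=\mathcal S(\varepsilon)+\mathcal G(\varepsilon)$. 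Only $\mathcal G$ will require the geometric argument; $\mathcal S$ is the classical term.

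For the smooth term $\mathcal S$, the point is that, for each fixed $y$, the map $\varepsilon\mapsto\chi_{|x-y|>\varepsilon}$ is a single decreasing jump of height one, so its $E_\rho$-seminorm equals $1$. Hence, by Minkowski's integral inequality for the $E_\rho$-norm,
$$
\|\mathcal S\|_{E_\rho}\le\int_{(B^\ast)^c}\big\|\chi_{|x-y|>\varepsilon}\big\|_{E_\rho}\,\big|R^H(x,y)-R^H(z,y)\big|\,|f(y)-f_B|\,dy
=\int_{(B^\ast)^c}\big|R^H(x,y)-R^H(z,y)\big|\,|f(y)-f_B|\,dy .
$$
This is precisely the right-hand side of the classical inequality, which I would close using the smoothness bound \eqref{acoriesz2} (with $|x-y|\sim|\xi-y|$ for $\xi$ on the segment $[x,z]\subset B$) and summation over the dyadic annuli $2^jr_0<|y-x_0|\le2^{j+1}r_0$ against $|f(y)-f_B|\lesssim (j+1)\|f\|_{BMO_H(\mathbb R^n)}$; the extra factor $|x-z|/|x-y|$ makes the resulting geometric series converge, giving $\|\mathcal S\|_{E_\rho}\le C\|f\|_{BMO_H(\mathbb R^n)}$. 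This reproduces estimate \eqref{a sustituir} in the present vector-valued situation.

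The hard part is the jump term $\mathcal G$. Here $\chi_{|x-y|>\varepsilon}-\chi_{|z-y|>\varepsilon}$ is non-zero exactly when $\varepsilon$ lies between $|x-y|$ and $|z-y|$, so for fixed $\varepsilon$ the integration runs over a thin shell of width $\lesssim|x-z|\le2r_0$; moreover $\mathcal G(\varepsilon)=0$ as soon as $\varepsilon\le r_0$, since then both truncations leave $(B^\ast)^c$ untouched. Evaluating an increment along a decreasing sequence $\varepsilon_j\searrow0$, I would write
$$
\mathcal G(\varepsilon_j)-\mathcal G(\varepsilon_{j+1})=\int_{S_j^z}h-\int_{S_j^x}h,\qquad S_j^p:=\{\,\varepsilon_{j+1}<|p-y|\le\varepsilon_j\,\},
$$
so that the shells $\{S_j^x\}_j$ are pairwise disjoint, likewise $\{S_j^z\}_j$, and each point of $\mathbb R$ lies in at most two of the sets $S_j^x\triangle S_j^z$. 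The plan is then to follow the geometric argument of \cite{GiTo}: bound $|\mathcal G(\varepsilon_j)-\mathcal G(\varepsilon_{j+1})|\le\int_{S_j^x\triangle S_j^z}|h|$, exploit the Gaussian decay of $R^H$ from \eqref{acoriesz1}, and sum the $\rho$-th powers using the disjointness of the shells. The main obstacle — and the very reason $\rho>2$ is needed — is that the crude total-variation ($\ell^1$) bound $\sum_j\int_{S_j^x\triangle S_j^z}|h|\lesssim\int_{(B^\ast)^c}|h|$ accumulates a factor $\big(\log(\gamma(x_0)/r_0)\big)^2$ coming from the $BMO_H$-growth of $f-f_B$ across the annuli when $r_0$ is small, and is therefore \emph{not} uniformly bounded. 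Passing instead to the $\ell^\rho$-sum with $\rho>2$, together with the geometric disjointness of the shells and the Gaussian factor, is what absorbs this logarithm and produces $\|\mathcal G\|_{E_\rho}\le C\|f\|_{BMO_H(\mathbb R^n)}$. Combining the two terms then gives the claim with a constant independent of $x,z$ and $B$.
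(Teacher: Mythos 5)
Your decomposition is exactly the paper's: a kernel-difference term (your $\mathcal S$, the paper's $A_1$) plus a truncation-jump term (your $\mathcal G$, the paper's $A_{2,\varepsilon_j}$), and your treatment of $\mathcal S$ --- Minkowski's inequality in $E_\rho$, the observation that $\varepsilon\mapsto\chi_{\{|x-y|>\varepsilon\}}$ has unit $\rho$-variation, then the classical smoothness/dyadic-annuli estimate --- is complete and coincides with the paper. The thin-shell geometry for $\mathcal G$ (the four cases, shells of width $|x-z|$, pairwise disjointness in $j$) is also the correct observation from \cite{GiTo}.

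The gap is in how you close the estimate for $\mathcal G$. You rightly note that the $\ell^1$ bound $\sum_j\int_{S_j^x\triangle S_j^z}|h|\le C\int_{(B^*)^c}|h|$ grows like $\big(\log(\gamma(x_0)/r_0)\big)^2\|f\|_{BMO_H}$, but the mechanism you propose to repair it --- ``the $\ell^\rho$-sum with $\rho>2$, disjointness of the shells, and the Gaussian factor'' --- does not work as stated. Disjointness only controls the $\ell^1$ norm of the numbers $a_j:=\int_{S_j^x\triangle S_j^z}|h|$, and the only free passage from there to $\ell^\rho$ is $\big(\sum_j a_j^\rho\big)^{1/\rho}\le\sum_j a_j$, which is precisely the bound you have just discarded; the Gaussian decay of $R^H$ is irrelevant because the logarithm accumulates at scales $|y-x_0|\lesssim 1$ where the Gaussian is of order one. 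The missing ingredient is H\"older's inequality with a fixed exponent $1<q<\rho$ applied to each shell integral: since each shell has measure $\le C|x-z|$, one gets $a_j\le C\big(\int_{S_j^x\cup S_j^z}|R^H(z,y)|^q|f(y)-f_B|^q\,dy\big)^{1/q}|x-z|^{1/q'}$, and then the inclusion $\ell^1\subset\ell^{\rho/q}$ together with disjointness yields $\big(\sum_j a_j^\rho\big)^{1/\rho}\le C|x-z|^{1/q'}\big(\int_{(B^*)^c}|R^H(z,y)|^q|f(y)-f_B|^q\,dy\big)^{1/q}$. This single step is what kills the logarithm: over the dyadic annuli the last integral is bounded by $Cr_0^{1-q}\sum_k(k+1)^q2^{-k(q-1)}\|f\|_{BMO_H}^q$, which converges because $q>1$ (for $q=1$ it is the divergent $\sum_k(k+1)$), and the leftover $r_0^{-1/q'}$ is absorbed by $|x-z|^{1/q'}\le(2r_0)^{1/q'}$. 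Note finally that this argument needs only $\rho>1$ (any $1<q<\rho$ works), so $\rho>2$ is not ``the very reason'' the lemma holds; that hypothesis is needed elsewhere, for the $L^2$ boundedness of the variation operator.
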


\begin{proof}
 We shall use a variant of the geometric argument developed for the first time in \cite{GiTo}. Let $x,y\in B$. We can write
\begin{align}
 \|U(f_2)(x)-U(f_2)(y)\|_{E_\rho} &=\left\|\left(\int_{|x-z|>\varepsilon} R^H(x,z)(f(z)-f_B)\chi_{(B^{*})^c}(z)~dz\right)_{\varepsilon>0}\right.\label{sustituido}\\
&\qquad-\left.\left(\int_{|y-z|>\varepsilon}R^H(y,z)(f(z)-f_B) \chi_{(B^{*})^c}(z)~dz\right)_{\varepsilon>0}\right\|_{E_\rho}\nonumber\\
&\le\left\|\left(\int_{|x-z|>\varepsilon}(R^H(x,z)-R^H(y,z))(f(z)-f_B) \chi_{(B^{*})^c}(z)~dz\right)_{\varepsilon>0}\right\|_{E_\rho}\nonumber\\
&\qquad+\sup_{\varepsilon_j\searrow 0}\Big(\sum_{j=1}^\infty \Big|\int_{-\infty}^\infty
(\chi_{\varepsilon_{j+1}<|x-z|<\varepsilon_j}(z)-\chi_{\varepsilon_{j+1}<|y-z|<\varepsilon_j}(z))\nonumber\\
&\qquad\qquad\qquad\qquad\times R^H(y,z)(f(z)-f_B)\chi_{(B^{*})^c}(z)~dz\Big|^\rho\Big)^{1/\rho} \nonumber\\
&=:A_1(x,y)+\sup_{\varepsilon_j\searrow
0}A_{2,\varepsilon_j}(x,y).\nonumber
\end{align}
By Minkowski's inequality and the smoothness of $R^H(x,y)$,
\begin{align*}
A_1(x,y)&\le \int_{-\infty}^\infty
\Big|R^H(x,z)-R^H(y,z)\Big||f(z)-f_B|\chi_{(B^{*})^c}(z)~dz\\
&\le C\sum_{k=0}^\infty \int_{2^{k+2}r_0<|x_0-z|\le
2^{k+3}r_0}\frac{|x-y|}{|x_0-z|^2}|f(z)-f_B|~dz\\
&\le C\sum_{k=0}^\infty \frac{r_0}{(2^kr_0)^2}\int_{|x_0-z|\le
2^{k+3}r_0}|f(z)-f_B|~dz\le C\|f\|_{BMO_H(\mathbb{R})}.
\end{align*}
We now estimate $A_{2,\varepsilon_j}$. Here we need to introduce the geometric argument of \cite{GiTo}. The factor $\chi_{\{\varepsilon_{j+1} < |x -z | < \varepsilon_{j}\}} -\chi_{\{\varepsilon_{j+1} < |y -z | < \varepsilon_{j}\}} $ will be non-zero if either $\chi_{\{\varepsilon_{j+1} < |x -z | <
\varepsilon_{j}\}} =1$  and $\chi_{\{\varepsilon_{j+1} < |y -z | <
\varepsilon_{j}\}}= 0$ or $\chi_{\{\varepsilon_{j+1} < |x -z | <
\varepsilon_{j}\}} =0$  and $\chi_{\{\varepsilon_{j+1} < |y -z | <
\varepsilon_{j}\}}= 1$. This means that the integral in $A_{2,\varepsilon_j}$ will be non-zero in
the following cases
\begin{itemize}
\item  $\varepsilon_{j+1} < |x -z | < \varepsilon_{j} $ and $|y-z| < \varepsilon_{j+1},$
\item   $\varepsilon_{j+1} < |x -z | < \varepsilon_{j} $ and $|y-z| > \varepsilon_{j},$
\item   $\varepsilon_{j+1} < |y -z | < \varepsilon_{j} $ and $|x-z| < \varepsilon_{j+1},$
\item   $\varepsilon_{j+1} < |y -z | < \varepsilon_{j} $ and $|x-z| > \varepsilon_{j}.$
\end{itemize}
In the first case  we observe that,  $\varepsilon_{j+1} < |x -z |
\le |x-y|+ |y- z| < |x-y| + \varepsilon_{j+1}$. Analogously in the
third case we have $\varepsilon_{j+1} < |y -z |    \le |x-y|+ |x- z|
< |x-y| + \varepsilon_{j+1}$.  In the second case we have
$\varepsilon_j < |y-z| \le |y-x|+|x-z| < |x-y| + \varepsilon_j$ and
analogously in the fourth  case we have $\varepsilon_j < |x-z| \le
|x-y|+|y-z| < |x-y| + \varepsilon_j.$ We fix $1<q<\rho$. Therefore, using H\"older's inequality and the continuous inclusion $\ell^1\subset\ell^{\rho/q}$,
\begin{align*}
\lefteqn{A_{2,\varepsilon_j}(x,y)}\\
&\le   \Big[\sum_{j=1}^\infty \Big|\int_{(B^\ast)^c}\chi_{\{\varepsilon_{j+1} < |x -z | < |x-y| + \varepsilon_{j+1}\}}(z)  \chi_{\{\varepsilon_{j+1} < |x -z | < \varepsilon_{j}\}}(z)|R^H(y,z)|  | f(z) - f _B|~  dz\Big|^\rho\Big]^{1/\rho} \\
&\quad +\Big[\sum_{j=1}^\infty \Big|\int_{(B^{*})^c} \chi_{\{\varepsilon_{j} < |y -z |     < |x-y| + \varepsilon_{j}\}}(z)  \chi_{\{\varepsilon_{j+1} < |x -z | < \varepsilon_{j}\}}(z)|R^H(y,z)|  | f(z) - f _B|~  dz\Big|^\rho\Big]^{1/\rho} \\
&\quad+\Big[\sum_{j=1}^\infty \Big|\int_{(B^{*})^c}\chi_{\{\varepsilon_{j+1} < |y -z |     < |x-y| + \varepsilon_{j+1}\}}(z)  \chi_{\{\varepsilon_{j+1} < |y -z | < \varepsilon_{j}\}}(z)|R^H(y,z)|  | f(z) - f _B|~  dz\Big|^\rho\Big]^{1/\rho} \\
&\quad+\Big[\sum_{j=1}^\infty \Big|\int_{(B^{*})^c} \chi_{\{\varepsilon_{j} < |x-z |     < |x-y| + \varepsilon_{j}\}}(z)  \chi_{\{\varepsilon_{j+1} < |y -z | < \varepsilon_{j}\}}(z)|R^H(y,z)|  | f(z) - f _B|~  dz\Big|^\rho\Big]^{1/\rho} \\
&\le
C \Big[\sum_{j=1}^\infty\Big(\int_{(B^{*})^c}\chi_{\{\varepsilon_{j+1}
< |x -z | < \varepsilon_{j}\}}(z)|R^H(y,z)|^q | f(z) - f
_B|^q~dz\Big)^{\rho/q}\Big]^{1/\rho}|x-y|^{1/q'}\\
&\quad+\Big[\sum_{j=1}^\infty\Big(\int_{(B^{*})^c}\chi_{\{\varepsilon_{j+1}
< |y -z | < \varepsilon_{j}\}}(z)|R^H(y,z)|^q  | f(z) - f
_B|^q~dz\Big)^{\rho/q}\Big]^{1/\rho}|x-y|^{1/q'}\\
&\le C\Big[\int_{(B^{*})^c}|R^H(y,z)|^q  | f(z) - f
_B|^q~ dz\Big]^{1/q}|x-y|^{1/q'}\\
&\le
C\left[\sum_{k=1}^\infty\int_{2^kr_0<|x_0-z|<2^{k+1}r_0}\frac{1}{|y-z|^q}|
f(z) - f _B|^q~dz\right]^{1/q}|x-y|^{1/q'}\\
&\le
C\left[\sum_{k=1}^\infty\frac{1}{(2^kr_0)^q}\int_{|x_0-z|<2^{k+1}r_0}|
f(z) - f _B|^q~dz\right]^{1/q}|x-y|^{1/q'}\\
&=
C\left[\sum_{k=1}^\infty\frac{1}{(2^k)^{q-1}}\frac{1}{2^kr_0}\int_{|x_0-z|<2^{k+1}r_0}|
f(z) - f _B|^qdz\right]^{1/q}\left(\frac{|x-y|}{r_0}\right)^{1/q'}\le C\|f\|_{BMO_H(\mathbb{R})}.
\end{align*}
This concludes the proof of Lemma \ref{Lem:geometrico}.
\end{proof}

At this point, as we remarked above, to prove the boundedness of $U$ from $BMO_H(\mathbb{R}^n)$ into $BMO_H(\mathbb{R}^n;E_\rho)$ we need to verify that $U1$ satisfies hypothesis (i) and (ii) of Theorem \ref{criterioT1} in this vector-valued setting.

\vskip0.2cm

\subsubsection{$U1$ satisfies hypothesis (i) of Theorem \ref{criterioT1}.} First note that, by using the properties of the function $\gamma$, it is enough to verify this hypothesis only for the balls $Q_k$ defined by the covering by critical balls, see Section \ref{Section:BMO}. We observe that by \cite[Theorem~A]{CMTT} and \eqref{acoriesz1} we have
\begin{align*}
\lefteqn{\frac{1}{|Q_k|}\int_{Q_k}\|U1(x)\|_{E_\rho}~dx} \\&\le \frac{1}{|Q_k|}\int_{Q_k}\|U(\chi_{Q_k^*})(x)\|_{E_\rho}~dx+\frac{1}{|Q_k|}\int_{Q_k}\int_{(Q_k^*)^c}|R^H(x,y)|~dy~dx\\
&\le\left(\frac{1}{|Q_k|}\int_{Q_k}\|U(\chi_{Q_k^*})(x)\|_{E_\rho}^2dx\right)^{1/2} +\frac{C}{|Q_k|}\int_{Q_k}\int_{(Q_k^*)^c}\frac{e^{-c\left[|x-y|^2+|x||x-y|\right]}}{|x-y|}~dy~dx\\
&\le C\left[1+\frac{1}{|Q_k|}\int_{Q_k}\int_{(Q_k^*)^c} \frac{e^{-c\left[|x-y|^2+|x||x-y|\right]}}{|x-y|}~dy~dx\right]=:C(1+L_k).
\end{align*}
Note that if $x\in Q_k$ and $y\in (Q_k^*)^c$, then $|x-y|\ge \gamma(x_k)$. We now distinguish two cases. If $|x_k|\le 1$ then $\gamma(x_k)=1/2$ and we can write
$$L_k\le\frac{1}{|Q_k|}\int_{Q_k}\int_{(Q_k^*)^c}\frac{e^{-c|x-y|^2}}{|x-y|}~dy~dx\le 2\int_{-\infty}^\infty e^{-cu^2}du.$$
On the other hand, if $|x|\ge 1$, $\gamma(x)\sim \frac{1}{|x|}$. Moreover, $\gamma(x)\sim\gamma(x_k)$ provided that $x\in Q_k$. Hence, if $|x_k|\ge 1$ we get
$$L_k\le \frac{1}{|Q_k|}\int_{Q_k}\int_{|x-y|\ge\gamma(x_k)}\frac{e^{-c|x-y|/\gamma(x_k)}}{|x-y|}~dy~dx\le\frac{C}{|Q_k|}\int_{Q_k}\int_{|x-y|\ge
\gamma(x_k)}\frac{\gamma(x_k)}{|x-y|^2}~dy~dx=C.$$
Therefore, hypothesis (i) in Theorem \ref{criterioT1} holds.

\subsubsection{$U1$ satisfies hypothesis (ii) of Theorem \ref{criterioT1}.} Let $B=B(x_0,r_0)$, where $x_0\in \mathbb{R}$ and   $0<r_0<\gamma(x_0)$. Let
$$R^{(k)}(x,y):=\frac{x-y}{4\pi}\int_0^{\gamma(x_k)^2}\left(\log\frac{1+s}{1-s}\right)^{-1/2} \frac{e^{-\frac{|x-y|^2}{4s}}}{(s(1-s^2))^{1/2}}~\frac{ds}{s},\qquad x,y\in\mathbb R.$$
It is clear that for every $0<\varepsilon<\eta<\infty$, $\displaystyle\int_{\varepsilon<|x-y|<\eta}R^{(k)}(x,y)~dy=0.$  Let $x,y \in B $, then
\begin{align}\label{F1}
\lefteqn{\|U1(x)-U1(y)\|_{E_\rho}}\nonumber \\
&=\sup_{\varepsilon_j}\left[\sum_{j=1}^\infty
\Big|\int_{\varepsilon_{j+1}<|x-z|<\varepsilon_j}R^H(x,z)~dz- \int_{\varepsilon_{j+1}<|y-z|<\varepsilon_j}R^H(y,z)~dz\Big|^\rho\right]^{1/\rho}\nonumber\\
&=\sup_{\varepsilon_j}\left[\sum_{j=1}^\infty
\Big|\int_{\varepsilon_{j+1}<|x-z|<\varepsilon_j}(R^H(x,z)-R^{(k)}(x,z))~dz\right. \nonumber\\
&\qquad\qquad\left.- \int_{\varepsilon_{j+1}<|y-z|<\varepsilon_j} (R^H(y,z)-R^{(k)}(y,z))~dz\Big|^\rho\right]^{1/\rho}\nonumber\\
&\le\sup_{\varepsilon_j}\left[\sum_{j=1}^\infty
\Big|\int_{\varepsilon_{j+1}<|x-z|<\varepsilon_j}(R^H(x,z)-R^{(k)}(x,z)-R^H(y,z) +R^{(k)}(y,z))~dz\Big|^\rho\right]^{1/\rho}\nonumber\\
&\quad+\sup_{\varepsilon_j}\left[\sum_{j=1}^\infty \Big|\int_{-\infty}^\infty
(\chi_{\{\varepsilon_{j+1}<|y-z|<\varepsilon_j\}}(z)- \chi_{\{\varepsilon_{j+1}<|x-z|<\varepsilon_j\}}(z))(R^H(y,z)-R^{(k)}(y,z))~dz\Big|^\rho\right]^{1/\rho}\nonumber\\
&\le \int_{-\infty}^\infty
\Big|R^H(x,z)-R^{(k)}(x,z)-R^H(y,z)+R^{(k)}(y,z)\Big|~dz\nonumber\\
&\quad+\sup_{\varepsilon_j}\left[\sum_{j=1}^\infty \Big|\int_{-\infty}^\infty
(\chi_{\{\varepsilon_{j+1}<|y-z|<\varepsilon_j\}}(z)-\chi_{\{\varepsilon_{j+1}<|x-z|<\varepsilon_j\}}(z)) (R^H(y,z)-R^{(k)}(y,z))~dz\Big|^\rho\right]^{1/\rho}\nonumber\\
&=:Z_1(x,y)+\sup_{\varepsilon_j\searrow 0}Z_{2,\varepsilon_j}(x,y),\qquad x,y\in B.
\end{align}

\noindent\textit{Estimate of $Z_1$}. The difference involving $R^H-R^{(k)}$ in $Z_1$ is decomposed, up to the multiplicative constant $(2\pi)^{-1}$, as follows
\begin{align}\label{F2}
\lefteqn{R^H(x,z)-R^{(k)}(x,z)-(R^H(y,z)-R^{(k)}(y,z))=\int_y^x\frac{\partial}{\partial
u}(R^H(u,z)-R^{(k)}(u,z))~du}\nonumber\\
&=\int_y^x\left\{\int_0^1\left(\log\frac{1+s}{1-s}\right)^{-1/2} \frac{e^{-\frac{1}{4}\left[s(u+z)^2+\frac{1}{s}(u-z)^2\right]}}{(s(1-s^2))^{1/2}}\left[\frac{1}{2}\left(\frac{1}{s}+s\right) -\frac{1}{4}\left(s(u+z)+\frac{u-z}{s}\right)^2\right]ds\right.\nonumber\\
&\qquad\left.-
\int_0^{\gamma(x_k)^2}\left(\log\frac{1+s}{1-s}\right)^{-1/2}\frac{e^{-\frac{|u-z|^2}{4s}}}{(s(1-s^2))^{1/2}} \left[\frac{1}{2s}-\frac{1}{4}\left(\frac{u-z}{s}\right)^2\right]ds\right\}du\nonumber\\
&=\int_y^x\Big\{\int_0^1\left(\log\frac{1+s}{1-s}\right)^{-1/2} \frac{e^{-\frac{1}{4}\left[s(u+z)^2+\frac{1}{s}(u-z)^2\right]}}{(s(1-s^2))^{1/2}}\left[-s+ \frac{s^2}{2}(u+z)^2+(u+z)(u-z)\right]ds\nonumber\\
&\qquad+\int_{\gamma(x_k)^2}^1\left(\log\frac{1+s}{1-s}\right)^{-1/2} \frac{e^{-\frac{1}{4}\left[(s(u+z)^2+\frac{1}{s}(u-z)^2\right]}}{(s(1-s^2))^{1/2}} \left[-\frac{1}{s}+\frac{1}{2}\left(\frac{u-z}{s}\right)^2\right]ds\nonumber\\
&\qquad+\int_0^{\gamma(x_k)^2}\left(\log\frac{1+s}{1-s}\right)^{-1/2}\frac{1}{(s(1-s^2))^{1/2}} \left[-\frac{2}{s}+\frac{1}{2}\left(\frac{u-z}{s}\right)^2\right]\nonumber\\
&\qquad\qquad\qquad\qquad\times2\left[e^{-\frac{1}{4}\left[s(u+z)^2+\frac{1}{s}(u-z)^2\right]}
-e^{-\frac{|u-z|^2}{4s}}\right]~ds\Big\}~du\nonumber\\
&=:I_1(x,y,z)+I_2(x,y,z)+I_3(x,y,z),\,\,\,x,y\in
Q_k^*,\,\,y<x\,\,\,\hbox{and}\,\,\,z\in \mathbb{R}.
\end{align}

For $I_1$ we have that
\begin{align}\label{F3}
\int_{-\infty}^\infty |I_1(x,y,z)|~dz&\le C\int_{-\infty}^\infty
\int_y^x\int_0^1\left(\log\frac{1+s}{1-s}\right)^{-1/2} \frac{e^{-\frac{1}{8}\left[s(u+z)^2+\frac{1}{s}(u-z)^2\right]}}{(s(1-s))^{1/2}}~ds~du~dz\nonumber\\
&\le C\int_y^x\int_0^1\left(\log\frac{1+s}{1-s}\right)^{-1/2}\frac{1}{(s(1-s))^{1/2}}\int_{-\infty}^\infty
e^{-\frac{|u-z|^2}{8s}}dz~ds~du\nonumber\\
&\le C\int_y^x\int_0^1\left(\log\frac{1+s}{1-s}\right)^{-1/2}\frac{1}{(1-s)^{1/2}}~ds~du\nonumber\\
&\le C|x-y|,\qquad x,y\in Q_k^*,~y<x.
\end{align}

For $I_2$,
\begin{align}\label{F4}
\int_{-\infty}^\infty |I_2(x,y,z)|~dz &\le \int_{-\infty}^\infty
\int_y^x\left[\int_{\gamma(x_k)^2}^{1/2}+\int_{1/2}^1\right]\left(\log\frac{1+s}{1-s}\right)^{-1/2} \frac{e^{-\frac{|u-z|^2}{8s}}}{s^{3/2}(1-s)^{1/2}}~ds~du~dz\nonumber\\
&\le C\int_y^x\left[1+\int_{\gamma(x_k)^2}^{1/2}\frac{1}{s^2}\int_{-\infty}^\infty e^{-\frac{t^2}{8s}}~dt~dz\right]~du\nonumber\\
&\le C\int_y^x\left[1+\int_{\gamma(x_k)^2}^{1/2}\frac{1}{s^{3/2}}~ds\right]~du\le\frac{|x-y|}{\gamma(x_k)},\qquad x,y\in Q_k^*,~y<x.
\end{align}

For $I_3$,
\begin{align*}
\lefteqn{\int_{-\infty}^\infty |I_3(x,y,z)|~dz}\\
&\le C\int_{-\infty}^\infty
\int_y^x\int_0^{\gamma(x_k)^2}\frac{1}{s}\Big|-\frac{1}{2s}+\frac{1}{4}\Big(\frac{u-z}{s}\Big)^2 \Big|\Big|e^{-\frac{1}{4}s(u+z)^2}-1\Big|e^{-\frac{1}{4}\frac{(u-z)^2}{s}}~ds~du~dz\\
&\le C\int_{-\infty}^\infty
\int_y^x\int_0^{\gamma(x_k)^2}\frac{1}{s}(u+z)^2e^{-\frac{1}{8}\frac{(u-z)^2}{s}}~ds~du~dz\\
&\le C\int_y^x\int_0^{\gamma(x_k)^2}\frac{1}{s}\int_{0}^\infty\left(u^2+t^2\right)e^{-\frac{t^2}{16s}}~dt~ds~du\\
&\le C\int_y^x\int_0^{\gamma(x_k)^2}\frac{1+u^2}{s^{1/2}}~ds~du\le C\gamma(x_k)\left(|x^3-y^3|+|x-y|\right),\qquad x,y\in Q_k^*,~y<x.
\end{align*}
Since $|x^3-y^3|\le |x-y|(x^2+y^2+|xy|)$ and
$\gamma(a)\sim\gamma(x_k)$, $a\in Q_k^*$, it follows that
$|x^3-y^3|\le C|x-y|/\gamma(x_k)^2$, $x,y\in Q_k^*$. Hence
\begin{equation}\label{F5}
\int_{-\infty}^\infty |I_3(x,y,z)|~dz\le
C\frac{|x-y|}{\gamma(x_k)},\,\,\,x,y\in Q_k^*.
\end{equation}
We conclude, by combining \eqref{F2}--\eqref{F5}, that
\begin{equation}\label{F6}
Z_1(x,y)\le C\frac{|x-y|}{\gamma(x_k)},\,\,\,x,y\in Q_k^*.
\end{equation}

\noindent\textit{Estimate of $Z_{2,\varepsilon_j}$}. To this end, let us decompose
$R^H(y,z)-R^{(k)}(y,z)$, up to the multiplicative constant $(4\pi)^{-1}$, as follows
\begin{align}\label{F7}
\lefteqn{R^H(y,z)-R^{(k)}(y,z)=\int_0^1\left(\frac{s}{1-s^2}\log\frac{1+s}{1-s}\right)^{-1/2} \frac{s(y+z)}{1-s^2}~e^{-\frac{1}{4}\left[s(y+z)^2+\frac{1}{s}(y-z)^2\right]}~ds}\nonumber\\
&\quad+\int_{\gamma(x_k)^2}^1\left(\frac{s}{1-s^2}\log\frac{1+s}{1-s}\right)^{-1/2} \frac{y-z}{s(1-s^2)}~e^{-\frac{1}{4}\left[s(y+z)^2+\frac{1}{s}(y-z)^2\right]}~ds\nonumber\\
&\quad+\int_0^{\gamma(x_k)^2}\left(\frac{s}{1-s^2}\log\frac{1+s}{1-s}\right)^{-1/2}\frac{y-z}{s(1-s^2)} \left[e^{-\frac{1}{4}\left[s(y+z)^2+\frac{1}{s}(y-z)^2\right]} -e^{-(y-z)^2/4s}\right]~ds\nonumber\\
&=:J_1(y,z)+J_2(y,z)+J_3(y,z),\qquad y,z\in \mathbb{R}.
\end{align}
By proceeding as in the geometric analysis of \eqref{sustituido} we can obtain
\begin{align*}
\lefteqn{\Big|\int_{-\infty}^\infty(\chi_{\varepsilon_{j+1}<|x-z|<\varepsilon_j}(z)- \chi_{\varepsilon_{j+1}<|y-z|<\varepsilon_j}(z)) (R^H(y,z)-R^{(k)}(y,z))~dz\Big|}\\
&\le C\left[\int_{-\infty}^\infty
\chi_{\varepsilon_{j+1}<|y-z|<\varepsilon_j}(z)\Big|R^H(y,z)-R^{(k)}(y,z)\Big|^q~dz\right]^{1/q} |x-y|^{1/q'}\\
&\quad+C\left[\int_{-\infty}^\infty
\chi_{\varepsilon_{j+1}<|x-z|<\varepsilon_j}(z)\Big|R^H(y,z)-R^{(k)}(y,z)\Big|^q~dz\right]^{1/q}|x-y|^{1/q'},\quad x,y\in \mathbb{R},
\end{align*}
where $1<q<\infty$. We take $1<q<\rho$. Then, by \eqref{F7},
\begin{align}\label{F8}
Z_{2,\varepsilon_j}(x,y) &\le C\left[\left(\sum_{j=1}^\infty\int_{-\infty}^\infty
\chi_{\varepsilon_{j+1}<|y-z|<\varepsilon_j}(z)\Big|R^H(y,z)-R^{(k)}(y,z)\Big|^q~dz\right)^{\rho/q}\right]^{1/\rho} |x-y|^{1/q'} \nonumber\\
&\quad+C\left[\sum_{j=1}^\infty\left(\int_{-\infty}^\infty
\chi_{\varepsilon_{j+1}<|x-z|<\varepsilon_j}(z) \Big|R^H(y,z)-R^{(k)}(y,z)\Big|^q~dz\right)^{\rho/q}\right]^{1/\rho}|x-y|^{1/q'}\nonumber\\
&\le C\left(\int_{-\infty}^\infty\Big|R^H(y,z)-R^{(k)}(y,z)\Big|^q~dz\right)^{1/q}|x-y|^{1/q'}\nonumber\\
&\le C\sum_{j=1}^3\left[\int_{-\infty}^\infty|J_j(y,z)|^q~dz\right]^{1/q}|x-y|^{1/q'},\quad x,y\in \mathbb{R}.
\end{align}

For $J_1$, Minkowski's inequality implies that
\begin{equation}\label{F9}
\left[\int_{-\infty}^\infty|J_1(y,z)|^q~dz\right]^{1/q}\le
C\int_0^1\left((1-s)\log\frac{1+s}{1-s}\right)^{-1/2}\left[\int_{-\infty}^\infty
e^{-\frac{q|y-z|^2}{8s}}~dz\right]^{1/q}ds\le C,
\end{equation}
for $y\in\mathbb R$.

For $J_2$,
\begin{align}\label{F10}
\left[\int_{-\infty}^\infty|J_2(y,z)|^qdz\right]^{1/q}&\le C\int_{\gamma(x_k)^2}^1\left(\log\frac{1+s}{1-s}\right)^{-1/2}\frac{1}{s(1-s)^{1/2}}\left[\int_{-\infty}^\infty e^{-\frac{q|y-z|^2}{8s}}dz\right]^{1/q}ds\nonumber\\
&\le C\int_{\gamma(x_k)^2}^1\left(\log\frac{1+s}{1-s}\right)^{-1/2}\frac{s^{-1+1/2q}}{(1-s)^{1/2}}~ds\nonumber\\
&\le C\gamma(x_k)^{-1/q'},\qquad y\in \mathbb{R}.
\end{align}

For $J_3$ we have that
\begin{align}\label{F11}
|J_3(y,z)|&\le C\int_0^{\gamma(x_k)^2}\frac{|y-z|}{s^2}\Big|e^{-\frac{s(y+z)^2}{4}}-1\Big|e^{-\frac{(y-z)^2}{4s}}~ds\nonumber\\
&\le C(y+z)^2\int_0^{\gamma(x_k)^2}\frac{e^{-\frac{(y-z)^2}{8s}}}{s^{1/2}}~ds,\qquad y,z\in \mathbb{R}.
\end{align}
Then, since $\gamma(y)\sim \gamma(x_k)$ provided that $y\in Q_k^*$,
\begin{align}\label{F13}
\left[\int_{-\infty}^\infty J_3(y,z)^q~dz\right]^{1/q}&\le C\int_0^{\gamma(x_k)^2}\frac{1}{s^{1/2}}\left[\int_{-\infty}^\infty \left(y^{2q}+t^{2q}\right)e^{-qt^2/8s}dt\right]^{1/q}ds\nonumber\\
&\le Cy^2\int_0^{\gamma(x_k)^2}s^{(1-q)/2q}~ds+C\int_0^{\gamma(x_k)^2}s^{(1+q)/2q}~ds\nonumber\\
&\le Cy^2\gamma(x_k)^{(q+1)/q}+C\gamma(x_k)^{(3q+1)/q}\nonumber\\
&\le C\frac{\gamma(x_k)^{(q+1)/q}}{\gamma(y)^2}+C\gamma(x_k)^{(3q+1)/q}\le C\gamma(x_k)^{-1/q'},\,\,\,y\in Q_k^*.
\end{align}

By combining now \eqref{F8}--\eqref{F13} we conclude that, for every
$x,y\in B$,
\begin{equation}\label{F14}
\sup_{\varepsilon_j}Z_{2,\varepsilon}(x,y)\le
C\left(\frac{|x-y|}{\gamma(x_k)}\right)^{1/q'}.
\end{equation}
This finishes the estimate of $Z_{2,\varepsilon_j}$.

\vskip0.2cm

From \eqref{F1}, \eqref{F6} and \eqref{F14} we deduce that
$$\|U1(x)-U1(y)\|_{E_\rho}\le C\left[\frac{r_0}{\gamma(x_k)}+\left(\frac{r_0}{\gamma(x_k)}\right)^{1/q'}\right],$$
for every $x,y\in B$. Then, by \cite[Lemma~2.2]{DGMTZ}, since $\gamma(x_k)\sim \gamma(x_0)$, it follows that
\begin{align*}
\frac{1}{|B|^2}\int_B\int_B\|U1(x)-U1(y)\|_{E_\rho}~dy~dx\le C\left[\frac{r_0}{\gamma(x_k)}+ \left(\frac{r_0}{\gamma(x_k)}\right)^{1/3}\right].
\end{align*}
We leave to the reader to check that this is stronger than hypothesis (ii) in Theorem \ref{criterioT1}. Hence the proof of Theorem \ref{variacion} is finished.

\end{document}